\def\sqr#1#2{{\vbox{\hrule height.#2pt
     \hbox{\vrule width.#2pt height#1pt \kern#1pt
           \vrule width.#2pt}
     \hrule height.#2pt}}}
\def\proof{{\bf Proof:}\enspace}
\newcommand{\dis}{\displaystyle}
\newtheorem{theorem}{Theorem}[section]
\newtheorem{lemma}[theorem]{Lemma}
\newtheorem{remark}[theorem]{Remark}
\newtheorem{proposition}[theorem]{Proposition}
\newcommand{\rz}{{\Bbb R}}
\newcommand{\N}{{\mathcal N}}
\newcommand{\pa}{\partial}
\newcommand{\Om}{\Omega}
\newcommand{\la}{\lambda}
\newcommand{\La}{\Lambda}
\newcommand{\na}{\nabla}
\newcommand{\De}{\Delta}
\newcommand{\de}{\delta}
\newcommand{\nn}{\nonumber}
\newcommand{\beam}{\begin{eqnarray}}
\newcommand{\eeam}{\end{eqnarray}}
\newcommand{\beao}{\begin{eqnarray*}}
\newcommand{\eeao}{\end{eqnarray*}}
\newcommand{\barr}{\begin{array}}
\newcommand{\earr}{\end{array}}
\newcommand{\beqq}{\begin{equation}}
\newcommand{\eeqq}{\end{equation}}
\title{Solutions for biharmonic equations with steep potential wells}
\author{ {\bf  Yuxia Guo}$^a$\,\quad  {\bf Zhongwei Tang}$^b$\thanks{The research was supported by National Science Foundation of China(11571040)}\quad {\bf Lushun Wang}$^{b}$\\
\small $^a$ Department of Mathematics, Tsinghua University, Beijing, 100084, P.R.China\\
\small $^b$ School of Mathematical Sciences, Beijing Normal University, Beijing 100875, P.R.China
}
\date{}
\begin{document}
\maketitle

\begin{abstract}
In this paper, we are concerned with the existence
of least energy solutions for the following biharmonic
equations: $$\Delta^2 u+(\lambda
V(x)-\delta)u=|u|^{p-2}u\quad \hbox{ in } \mathbb{R}^N,\eqno{(P)}$$
where $N\geq 5, 2<p\leq\frac{2N}{N-4}, \lambda>0$ is a parameter, $V(x)$ is a nonnegative potential function with nonempty zero sets  $\mbox{int} V^{-1}(0)$, $0<\delta<\mu_0$ and $\mu_0$ is the principle eigenvalue of  $\Delta^2$ in the zero sets $\mbox{int} V^{-1}(0)$ of $V(x)$. Here $\mbox{int} V^{-1}(0)$ denotes the interior part of the set $V^{-1}(0):=\{x\in\rz^N: V(x)=0\}$. We prove that equation $(P)$ admits a
least energy solution which is trapped  near the zero sets $\mbox{int} V^{-1}(0)$  for $\la>0$ large.
\end{abstract}

{\bf Keywords:} Least energy solutions; biharmonic equations; potential wells.

{\bf AMS} Subject Classification: 35Q55,  35J655

\numberwithin{equation}{section}
\section{Introduction and main results}
We consider the following biharmonic equation:
\begin{equation}\label{maineq}
\begin{cases}\Delta^2 u+(\lambda V(x)-\delta)u=|u|^{p-2}u
 \quad\hbox{in}\;\mathbb{R}^N,\\ u\in H^2(\rz^N),
\end{cases}
\end{equation}
where $N\geq 5$, $\lambda>0$ is a parameter, $2< p\leq2^{**}$, $2^{**}:=\frac{2N}{N-4}$ is the critical Sobolev exponent for biharmonic operator.

In last decades, biharmonic equation or even its higher version of polyharmonic equation has gotten great attention due to its application in physic and geometry.  In fact, as a mathematical modeling, biharmonic equation can be used to describe some phenomenas appeared in physics and engineering,  such as, the problem of nonlinear oscillation in a suspension bridge (see Lazer and McKenna \cite{lazer}, McKenna \cite{walter}) and the problem of the static deflection of an elastic plate in a fluid (see Abrahams and Davis \cite{davis}). More precisely, when we consider the compatibility equations of elastic mechanics under small deviation of the thin plates, or the Von Karma system describing the mechanic behaviors under large deviation of thin plates, we are forced to study a class of higher order equation or system with biharmonic operator $\Delta^2$.  Mathematically, biharmonic operator is closely related to Paneitz operator, which has been found considerable interest because of its geometry roots.

For the existence and multiplicity of solutions related to biharmonic equations, we firstly refer the reader to the paper by Alves and Miyagaki\cite{alves1}, where they proved the existence of nontrivial solutions to semilinear biharmonic problems with critical nonlinearities.  In  \cite{salvatore}, Salvatore and Squassina proved the existence of infinitely many solutions to a polyharmonic Schrodinger equation with non-homogeneous boundary date on unbounded domain. In \cite{marcos}, Pimenta and Soares  studied the existence and concentration of solutions for a class of biharmonic equations.

In \cite{fp}, using Ljusternik-Schnirelmann theory, Figueiredo and Pimenta proved the existence of multiple solutions of the following biharmonic problem
\[
\begin{cases}\epsilon^4\De^2u+V(x)u=f(u)+\gamma|u|^{2^{**}-2}\quad\mbox{in}\,
\rz^N\\
u\in H^2(\rz^N),
\end{cases}
\]
where $\epsilon>0$ is a small number, $N\geq 5$, $2^{**}=\frac{2N}{N-2}$, $\gamma=0$ or $1$, $V(x)$ is a positive continuous potential and $f(u)$ is subcritical.

Recently, Zhang, Tang and Zhang \cite{ztz} considered the following biharmonic problem
\beqq\label{addeq1}
\begin{cases}\De^2u-\De u+\la V(x)u=f(x,u)\quad\mbox{in}\,\rz^N\\
u\in H^2(\rz^N),
\end{cases}
\eeqq
where $\la>0$, $V\geq0$ is a continuous potential well, $\Om:=\mbox{int}V^{-1}(0)$ is a nonempty bounded domain with smooth boundary, $f(x,u)$ is a function with sublinear growth. For $\la$ large enough, they proved the existence of least energy solutions to (\ref{addeq1}) by taking the infimum of the energy functional over a suitable Banach space. Furthermore, they also proved the solution $u_\la$ of (\ref{addeq1}) which converges along a subsequence in $H^2(\rz^N)$ to a solution of the limit problem
\[
\begin{cases}\De^2u-\De u=f(x,u)\,\,&\mbox{in}\, \Om,\\
\De u=u=0 &\mbox{on}\, \pa\Om,
\end{cases}
\]

More recently, Alves and N\'obrega \cite{an} studied the following problem
\beqq\label{addeq2}
\begin{cases}\De^2u-\De u+(\la V(x)+1)u=f(u)\quad\mbox{in}\,\rz^N,\\
u\in H^2(\rz^N),
\end{cases}
\eeqq
where $\la>0$, $V\geq0$ is a continuous potential well, $\Om:=\mbox{int}V^{-1}(0)$ is a nonempty bounded open set with smooth boundary, $\Om$ has $k$ isolated connected components, $f$ is continuous with subcritical growth. Inspired by Bartsch and Wang \cite{bw}, they established the existence of multi-bump solutions to (\ref{addeq2}) which is trapped near $\Om$ by a deformation Lemma for $\la$ large enough.

For any other related results for biharmonic elliptic equations or polyharmonic equations, we refer the readers
to Deng and Shuai \cite{deng}, Carriao, Demarque and Miyagaki \cite{carriao}, Hu and Wang \cite{hu}, Gazzola and Grunau \cite{grunau}, Guo, Huang and Zhou \cite{guo},  Wang and Shen \cite{wang},  Davila, Dupaigne, Wang and Wei \cite{wei}, Ye and Tang \cite{ye},  and the references therein.

The  aim of this paper is to study the existence and asymptotic behavior of least energy solutions to \eqref{maineq}.

Now we state our assumptions as follows:
\begin{itemize}
  \item[$(V_1)$] $V(x)\in C(\rz^n,\rz)$, $V(x)\geq 0$ and $0<V_\infty:=\liminf_{|x|\to\infty}V(x)<+\infty$;
  \item[$(V_2)$] $\Om:=\mbox{int } V^{-1}(0)$ is a non-empty bounded smooth domain, where $\mbox{int } V^{-1}(0)$ denotes the interior part of the set $V^{-1}(0):=\{x\in\rz^N: V(x)=0\}$;
  \item[$(V_3)$] $0<\delta<\mu_0$, $\mu_0$ is the principle eigenvalue of the operator $\Delta^2$ in $H^2(\Om)\cap H^1_0(\Om)$.
\end{itemize}

\begin{remark}
Indeed, we can replace the condition $(V_1)$ by the following weaker one
\begin{itemize}
  \item[$(\hat{V}_1)$] $V(x)\in C(\rz^n,\rz)$, the set
  $\{x\in\rz^N:0\leq V(x)\leq M_0\}$ is bounded in $\rz^N$ for some $M_0>0$.
\end{itemize}
Precisely, select $M_0=\frac12V_\infty$, according to conditions $(V_1)$ and $(V_2)$, there exists $R>0$ such that
\beqq\label{bV}
\Om\subset\{x\in\rz^N:V(x)\leq M_0\}\subset B_R(0),
\eeqq
where $B_R(0)$ denotes the ball center at $0$ with radii $R$.
\end{remark}

\begin{remark}\label{re:zeroset}
For the smooth assumption of the boundary $\pa\Omega$ in $(V_2)$, we may only assume that $\pa\Omega$ is convex or even weak one: $\pa\Omega$ is Lipschitz continuous and satisfies uniformly outer ball condition. Indeed,  under these assumptions on the boundary, one can check that $\left(\int_\Omega( \Delta u)^2dx\right)^{1/2}$ is equivalent with the standard norm endowed in $H^2(\Omega)\cap H^1_0(\Omega)$ ( see F. Gazzola, H.-Ch. Grunau and G. Sweers \cite[Theorem 2.31]{GGS}).
\end{remark}

Before the statement of our main result,  we introduce some  notations. We set $V_\la(x):=\lambda V(x)-\delta.$ Let $X=:\{u\in
H^2(\mathbb{R}^N)| \int_{\mathbb{R}^N}V(x)u^2dx<+\infty\}$, endowed with the norm:
$$\|u\|_{\la,0}:=\left(\int_{\mathbb{R}^N}(|\De u|^2 +V^+_\lambda(x)u^2 ) dx\right)^{\frac12}$$
where $V^+_\lambda=\max\{V_\lambda,0\}$. It is easy to see that  $(X,\|\cdot\|_{\la,0})$ is a Banach space. For $\la$ large enough, we will prove that
 $$\|u\|_{\la}:=\left(\int_{\mathbb{R}^N}(|\De u|^2 +V_\lambda(x)u^2 )dx\right)^{\frac12}$$
is well defined and indeed a norm which is equivalent to $\|\cdot\|_{\la,0}$ in $X$. For the convenience, we denote the Banach space $(X,\|\cdot\|_\la)$ by $X_\la$.

We define the functional $J_\lambda(u)$ on $X_\la$ by:
\begin{equation}\label{functional}
J_\lambda(u)=\frac{1}{2}\int_{\mathbb{R}^N}(|\De u|^2
 +V_\lambda(x)u^2)dx
 -\frac{1}{p}\int_{\mathbb{R}^N}|u|^{p} dx.
\end{equation}
It is not difficult to verify that the functional $J_\lambda(u)$ is $C^1$ in $X_\la$.

The Frech\'et derivative $J'_\lambda(u)$ is defined by:
\beqq\label{diffphi}
\langle J'_\lambda(u), w\rangle
=\int_{\rz^N}(\De
u\De w+V_\lambda(x)u w)dx -\int_{\rz^N}|u|^{p-2}uw dx, \hbox{ for }w\in X_\lambda.
\eeqq
We say $u$ is a weak solution  of \eqref{maineq} if $u\in X_\la$ such that $J'_\lambda(u)=0$, and $u$ is nontrivial if $u\neq 0$.

We define the Nehari manifold $\N_\lambda$ by
$$\N_\lambda=\{u\in
X_\la\setminus \{0\}: \langle J'_\lambda(u),  u\rangle=0\}$$
and let $$c_\lambda=\inf_{u\in \N_\lambda}J_\lambda(u).$$ We say $u_\la$ is a least energy solution of \eqref{maineq} with least energy $c_\lambda$ if $u_\la\in
\N_\lambda$ such that $c_\lambda$ is achieved.

We also consider the  following  problem in the bounded domain $\Omega=\mbox{int} \{V^{-1}(0)\},$
\begin{equation}\label{limteq}
\begin{cases}\Delta^2
u-\de u=|u|^{p-2}u&\quad \hbox{in}\quad \Omega\\
u\neq 0 &\quad \hbox{in}\quad \Omega\\
 u=0,\Delta u=0, &\quad \hbox{on}\quad\partial\Omega,
\end{cases}
\end{equation}
which is a kind of $limit$ problem of the original problem \eqref{maineq}.
\begin{remark}\label{re:postive}
Normally, because of the lack of maximum principle for biharmonic problems, one can not expect that the least energy solution of \eqref{limteq} is one sign, say positive. In this paper, we only consider nontrivial least energy solutions to \eqref{limteq}.
\end{remark}

Similar as the definitions of the least energy $c_\la$ and the least energy solutions $u_\la$ of \eqref{maineq}, we can also define the least energy $c(\Om)$ and the corresponding least energy solution $u$ of the limit problem \eqref{limteq}.

Our main result in this paper can be stated as follows:
\begin{theorem}\label{th:main-theorem1}
Under the conditions $(V_1)$, $(V_2)$ and $(V_3)$, we assume that $2<p<2^{**}:=\frac{2N}{N-4}$ if $N\geq 5$ or $p=2^{**}$ if $N\geq 8$. Then for $\la$ large enough,  \eqref{maineq} has a least energy
solution $u_{\la}(x)$ which achieves $c_\la$. Moreover,  for any
sequence $\la_n\to \infty$, there exists a subsequence of $\{u_{\la_n}(x)\}$, still denoted by $\{u_{\la_n}(x)\}$, such that $u_{\la_n}(x)$
converges \textmd{in} $H^{2}(\rz^N)$ to a least
energy solution $u(x)$ of \eqref{limteq}.
\end{theorem}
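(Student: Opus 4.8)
The plan is to follow the steep-potential-well scheme of Bartsch--Wang, adapted to the fourth-order operator, and, in the critical case, to combine it with a Brezis--Nirenberg type threshold estimate. The first task is to justify the norm $\|\cdot\|_\la$ and establish coercivity. For $u\in X$ one splits $\int_{\rz^N}V_\la(x)u^2\,dx$ over $B_R(0)$ and its complement, with $R$ as in \eqref{bV}: outside $B_R(0)$ one has $\la V(x)-\de\ge\la M_0-\de>0$ once $\la$ is large, while inside $B_R(0)$ the negative part $-\de\int_{B_R}u^2$ is absorbed into $\int_{B_R}|\De u|^2$ via the Sobolev embedding on $B_R$ and, on the portion of $B_R$ lying in $\Om$, via the eigenvalue gap $\de<\mu_0$ from $(V_3)$. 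This yields the equivalence $\|\cdot\|_\la\sim\|\cdot\|_{\la,0}$ on $X$ together with a uniform coercivity bound $\|u\|_\la^2\ge c_0\|u\|_{H^2(\rz^N)}^2$ with $c_0>0$ independent of $\la\ge\la_0$; the same splitting shows that for each $\epsilon>0$ the embedding $X_\la\hookrightarrow L^q(\{V\ge\epsilon\})$ is compact, uniformly in $\la\ge\la_0$, for $2\le q<2^{**}$.

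Next I would carry out the minimization on $\N_\la$. For each $u\in X_\la\setminus\{0\}$ the fibering map $t\mapsto J_\la(tu)$ has a unique positive maximum, so $\N_\la\ne\emptyset$, $c_\la>0$, and $c_\la$ equals the mountain-pass level. In the subcritical range $2<p<2^{**}$, the uniform coercivity and compact embeddings give the Palais--Smale condition at every level (a concentration-compactness splitting using $V_\infty>0$ excludes loss of mass to infinity), so direct minimization over $\N_\la$ yields $u_\la\in\N_\la$ with $J_\la(u_\la)=c_\la$ and $J_\la'(u_\la)=0$. In the critical case $p=2^{**}$ the Palais--Smale condition holds only below $\tfrac{2}{N}S^{N/4}$, where $S$ is the best constant in $\|\De u\|_{L^2(\rz^N)}^2\ge S\,\|u\|_{L^{2^{**}}(\rz^N)}^2$; here one estimates $c_\la$ with truncated Talenti instantons concentrated inside $\Om$, where $V\equiv0$, so that the potential term disappears and only the linear term $-\de u$ contributes a correction. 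This is the fourth-order analogue of the Brezis--Nirenberg computation, and the sign and order of that correction relative to the instanton error terms are favorable precisely when $N\ge8$, giving $c_\la<\tfrac{2}{N}S^{N/4}$ and hence existence of the minimizer.

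To pass to the limit $\la\to\infty$, first observe that testing with functions in $C_c^\infty(\Om)$ (on which $V\equiv0$, so $J_\la$ reduces to the energy functional $J_\Om$ of \eqref{limteq}) gives $c_\la\le c(\Om)$ for all large $\la$. Together with the Nehari identity $J_\la(u_\la)=(\tfrac12-\tfrac1p)\|u_\la\|_{L^p}^p$ and the coercivity bound, this makes $\{u_\la\}$ bounded in $H^2(\rz^N)$ uniformly in $\la$ and keeps $\|u_\la\|_{L^p}$ bounded away from $0$. Given $\la_n\to\infty$, extract $u_{\la_n}\rightharpoonup u$ in $H^2(\rz^N)$ with $u_{\la_n}\to u$ in $L^q_{loc}$. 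Since $\|u_{\la_n}\|_{\la_n}$ and $\|u_{\la_n}\|_{H^2(\rz^N)}$ are bounded uniformly, $\la_n\int_{\rz^N}V u_{\la_n}^2\,dx$ stays bounded, so Fatou's lemma gives $\int_{\rz^N}V u^2\,dx=0$; hence $u=0$ a.e. on $\rz^N\setminus\ol\Om$ and $u$ lies in the energy space of \eqref{limteq} (Remark \ref{re:zeroset} ensures the boundary is regular enough for this zero-extension characterization). Testing $J_{\la_n}'(u_{\la_n})=0$ with $u_{\la_n}-u$ and combining the local compactness with the smallness of the tails outside $B_R(0)$ (from the coercivity step) yields $u_{\la_n}\to u$ strongly in $H^2(\rz^N)$, so $u$ is a nontrivial weak solution of \eqref{limteq}. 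Finally, strong convergence gives $J_\Om(u)=\lim_n J_{\la_n}(u_{\la_n})=\lim_n c_{\la_n}\le c(\Om)$, while $u$ belonging to the Nehari manifold of \eqref{limteq} forces $J_\Om(u)\ge c(\Om)$; hence $u$ achieves $c(\Om)$ and is a least energy solution of the limit problem.

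I expect the principal difficulty to be the critical case: one must establish a localized Palais--Smale condition strictly below $\tfrac{2}{N}S^{N/4}$ that is uniform for large $\la$, simultaneously controlling concentration (due to the critical exponent) and escape to infinity (due to the unbounded domain), and one must carry out the instanton energy expansion for $\De^2$ in detail, exhibiting a strictly negative correction of the right order from the $-\de u$ term --- which is exactly where the hypothesis $N\ge8$ is used. In the subcritical regime the only genuinely delicate point is the uniform-in-$\la$ compactness, which rests entirely on the coercivity estimate of the first step.
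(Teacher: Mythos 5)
Your overall strategy coincides with the paper's: uniform embedding of $X_\la$ into $H^2(\rz^N)$, Nehari minimization with a tail estimate ruling out loss of mass at infinity, a Brezis--Nirenberg threshold $c(\Om)<\frac2N S^{N/4}$ via truncated instantons (where $N\ge 8$ enters) in the critical case, and a limiting argument identifying the limit of the minimizers as a least energy solution of \eqref{limteq}. The one step where your proposed mechanism does not close is the one you yourself flag as the foundation of everything else: the coercivity of the quadratic form, i.e.\ that $\int_{\rz^N}(|\De u|^2+V_\la u^2)\,dx$ controls $\int_{\rz^N}u^2\,dx$, equivalently that $\|\cdot\|_\la$ is a norm equivalent to $\|\cdot\|_{\la,0}$. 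You propose to absorb $-\de\int_{B_R}u^2$ into $\int_{B_R}|\De u|^2$ ``via the Sobolev embedding on $B_R$'' and ``via the eigenvalue gap $\de<\mu_0$'' on $\Om$. Neither device works directly: the H\"older--Sobolev bound $\int_{B_R}u^2\le C|B_R|^{4/N}\int|\De u|^2$ carries a constant unrelated to $\de$, so absorption would require $\de\, C|B_R|^{4/N}<1$, which is not implied by $(V_3)$; and the principal eigenvalue $\mu_0$ of $\De^2$ on $H^2(\Om)\cap H^1_0(\Om)$ only estimates functions satisfying the Navier conditions on $\pa\Om$, not restrictions to $\Om$ (or to $B_R$) of arbitrary $u\in H^2(\rz^N)$. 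Hypothesis $(V_3)$ makes the form positive only asymptotically in $\la$, and a genuinely asymptotic argument is required.

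The paper supplies it in Lemma \ref{le:ess}, Lemma \ref{le:ape} and Lemma \ref{le:en}: Weyl's theorem (writing $L_\la$ as a relatively form-compact perturbation of $\De^2+W_{\la,1}+\la M_0-\de$) pushes $\sigma_{ess}(L_\la)$ above $\la M_0-\de\to+\infty$, so for large $\la$ the bottom of the spectrum is the principal eigenvalue $\mu(L_\la)$; then normalized eigenfunctions are shown to concentrate on $\Om$ and to converge to an admissible competitor for $\mu(L_0)=\mu_0-\de$, giving $\mu(L_\la)\to\mu_0-\de>0$ and hence $\mu(L_\la)>\frac{\mu_0-\de}{2}$ for $\la>\La_1$. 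This is exactly the coercivity $\|u\|_\la^2\ge\frac{\mu_0-\de}{2}\|u\|_{L^2(\rz^N)}^2$ and yields the norm equivalence with constants independent of $\la$; you need to replace your absorption step by an argument of this type (or a contradiction/compactness argument to the same effect) before the rest of your outline can run. Two smaller points: in the asymptotic step for $p=2^{**}$, testing with $u_{\la_n}-u$ plus local compactness is not enough because of possible concentration, and the threshold $c(\Om)<\frac2N S^{N/4}$ must be invoked again, as in Lemma \ref{le:calimit}; and the paper reaches $\lim_{\la\to\infty}c_\la=c(\Om)$ by monotonicity of $c_\la$ plus a contradiction, which is equivalent to your direct argument via $u\in\N_\Om$.
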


This paper is organized as follows: In Section \ref{sec:pr}, we give some preliminary results. In Section \ref{sec:limit}, we prove the existence of least energy solutions to the limit equation (\ref{limteq}). In Section \ref{sec:els}, we prove the existence of least energy solutions to (\ref{maineq}) for $\la$ large. In Section  \ref{sec:abo}, we study the asymptotic behavior of least energy solutions as $\la\to+\infty$ and give the proof of Theorem \ref{th:main-theorem1}.

\section{Preliminary}\label{sec:pr}

In this section,  we firstly show that the space $(X,\|\cdot\|_{\la,0})$ can be embedded into $H^2(\rz^N)$ uniformly in large $\la$. Secondly, we give some results related to the spectrum of the operator $\Delta^2+\la V-\de$ in the space $(X,\|\cdot\|_{\la,0}).$ Finally, we prove that $\|\cdot\|_{\la,0}$ and $\|\cdot\|_{\la}$ are equivalent norms in $X$ for $\la$ large enough. For simplicity, we denote both the spaces $(X,\|\cdot\|_{\la,0})$ and $(X,\|\cdot\|_{\la})$  by $X_\la$.

\begin{lemma}\label{le:em}
Assume ($V_1$), ($V_2$) and $(V_3)$ hold, then there exists $\La_0>0$ such that for each $\la>\La_0$ and $u\in X_\la$, we have
\beqq\label{em1}
\|u\|_{H^2(\rz^N)}\leq C\|u\|_{\la,0}
\eeqq
for some $C>0$ which does not depend on $\la$.
\end{lemma}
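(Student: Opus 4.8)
The plan is to establish the uniform $H^2(\rz^N)$-control in two pieces: first bound the $L^2$-norm of $u$ (and, en route, $\int_{\rz^N} |\De u|^2\,dx$ and $\int_{\rz^N} u^2\,dx$) by $\|u\|_{\la,0}^2$ with a constant independent of large $\la$, and then recall that for functions in $H^2(\rz^N)$ the quantity $\int |\De u|^2 + \int u^2$ is an equivalent norm to the full $\|\cdot\|_{H^2(\rz^N)}^2 = \sum_{|\al|\le 2}\int |D^\al u|^2$. The second fact is a standard Fourier-transform computation ($|\xi|^2 \le \tfrac12(|\xi|^4 + 1)$, so $\int |\na u|^2 \le \tfrac12(\int |\De u|^2 + \int u^2)$, and then all intermediate derivatives are controlled), so the real content is the first piece.

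For the first piece, fix $M_0 = \tfrac12 V_\infty$ and $R>0$ as in the Remark following the main theorem, so that $\Om \subset \{V \le M_0\} \subset B_R(0)$. Split $\rz^N = B_R(0) \cup (\rz^N \setminus B_R(0))$. On the outside region $V(x) \ge M_0$, so choosing $\la$ large enough that $\la M_0 - \de \ge \tfrac12 \la M_0 \ge 1$ (say $\la > \La_0 := \max\{1, 2(1+\de)/M_0\}$) we get $V_\la^+(x) \ge \la V(x) - \de \ge 1$ there, hence
$$
\int_{\rz^N \setminus B_R(0)} u^2\,dx \;\le\; \int_{\rz^N \setminus B_R(0)} V_\la^+(x)\,u^2\,dx \;\le\; \|u\|_{\la,0}^2 .
$$
It remains to control $\int_{B_R(0)} u^2\,dx$ by $\|u\|_{\la,0}^2$. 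Here I would use an interpolation/compactness argument: on the bounded ball $B_R(0)$, the embedding of $H^2$ into $L^2$ together with the inequality $\|u\|_{L^2(B_R)}^2 \le \va \|\De u\|_{L^2(B_R)}^2 + C_\va \|u\|_{L^2(B_{2R})}^2$ type estimates — or more simply, since $u \in H^2(\rz^N)$ we already know $\int_{B_R} u^2 \le \int_{\rz^N} u^2 < \infty$; what we need is the \emph{uniform} bound. Combine the outside estimate with $\int_{\rz^N}|\De u|^2\,dx \le \|u\|_{\la,0}^2$: by the Gagliardo–Nirenberg–Sobolev inequality on $\rz^N$, $\|u\|_{L^{2^{**}}(\rz^N)} \le C\|\De u\|_{L^2(\rz^N)} \le C\|u\|_{\la,0}$, and then by Hölder on the fixed bounded ball, $\int_{B_R(0)} u^2\,dx \le |B_R(0)|^{4/N}\,\|u\|_{L^{2^{**}}(B_R(0))}^2 \le C(R,N)\,\|u\|_{\la,0}^2$, with $C(R,N)$ manifestly independent of $\la$.

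Putting these together, $\int_{\rz^N} u^2\,dx \le (1 + C(R,N))\|u\|_{\la,0}^2$ and $\int_{\rz^N}|\De u|^2\,dx \le \|u\|_{\la,0}^2$ for all $\la > \La_0$; the Fourier argument above then upgrades this to $\|u\|_{H^2(\rz^N)}^2 \le C\|u\|_{\la,0}^2$ with $C = C(N,R)$ independent of $\la$, which is the claim. The only mildly delicate point is the control of the low-frequency/bounded-ball part of $\|u\|_{L^2}$; I expect the Sobolev-plus-Hölder route just described to handle it cleanly since the ball is fixed and the $\De u$-term already sits inside $\|u\|_{\la,0}$. (Note that for the borderline case $p = 2^{**}$ one needs $N \ge 8$ only later; Lemma~\ref{le:em} itself holds for all $N \ge 5$.)
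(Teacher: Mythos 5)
Your proposal is correct and follows essentially the same route as the paper: split $\rz^N$ into $B_R(0)$ and its complement, absorb the outer $L^2$-mass into $\int V_\la^+ u^2$ using $V\ge M_0$ there for $\la$ large, and control the inner $L^2$-mass via the Sobolev inequality $\|u\|_{L^{2^{**}}}\le C\|\De u\|_{L^2}$ plus H\"older on the fixed ball. The only difference is cosmetic (your choice of $\La_0$ and your explicit Fourier-side remark that $\int|\De u|^2+\int u^2$ is equivalent to the full $H^2$-norm, which the paper leaves implicit).
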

\proof Let $M_0=\frac12V_\infty$, by (\ref{bV}), we know that
\beqq\label{em2}
V(x)\geq M_0, \forall x\in\rz^N\setminus B_{R}(0) \mbox{ and } \mbox{supp} V^-_{\la}\subset B_R(0), \forall\la>\frac{\de}{M_0},
\eeqq
where $\mbox{supp} V^-_\la$ denotes the support set of $V^-_\la$.

For each $u\in X_\la$ and $\la>\frac{M_0+\de}{M_0}$, it follows from  (\ref{em2}), we have
\beam\label{em3}
\int_{\rz^N\setminus B_R(0)}u^2dx&\leq&\frac{1}{M_0}\int_{\rz^N\setminus B_R(0)}(\la V(x)-\de)u^2dx\nn\\
&\leq&\frac{1}{M_0}\int_{\rz^N\setminus B_R(0)}V_\la^+ u^2dx\nn\\
&\leq&\frac{1}{M_0}\int_{\rz^N}(|\De u|^2+V_\la^+ u^2)dx.
\eeam
By H\"older's inequality and Sobolev inequality, we obtain
\beam\label{em4}
\int_{B_R(0)}u^2dx&\leq&\left(\int_{B_R}|u|^{\frac{2N}{N-4}}dx\right)^{\frac{N-4}{N}}|B_R|^{\frac4N}\nn\\
&\leq& C_1|B_R|^{\frac{4}{N}}\int_{\rz^N}|\De u|^2dx\nn\\
&\leq& C_1|B_R|^{\frac{4}{N}}\int_{\rz^N}(|\De u|^2+V_\la^+ u^2)dx.
\eeam
Combining (\ref{em3}) and (\ref{em4}), we have
\[
\int_{\rz^N}(|\De u|^2+u^2)dx\leq\left(\frac{M_0+1}{M_0}+C_1|B_R|^{\frac{4}{N}}\right)\int_{\rz^N}(|\De u|^2+V_\la^+ u^2)dx.
\]
Thus (\ref{em1}) holds for $\La_0=\frac{M_0+\de}{M_0}$ and $C=\sqrt{\frac{M_0+1}{M_0}+C_1|B_R|^{\frac{4}{N}}}$.\hfill{$\Box$}
\vskip8pt

Let $L_0=\De^2-\de$, $L_\la=\De^2+V_\la$ and $\sigma_{ess}(L_\la)$ be the essential spectrum of $L_\la$ in $X_\la$. In the following, we are going to discuss some results related to the spectrum of the operator $L_0$ and $L_\lambda.$

\begin{lemma}\label{le:ess}
Under the conditions ($V_1$), ($V_2$) and ($V_3$), for each $\la>\La_0$, we have
\[
\sigma_{ess}(L_\la)\subset [\la M_0-\de, +\infty).
\]
Furthermore, $\inf\sigma_{ess}(L_\la)\to+\infty$ as $\la\to+\infty$.
\end{lemma}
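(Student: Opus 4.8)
The plan is to establish the inclusion $\sigma_{ess}(L_\la)\subset[\la M_0-\de,+\infty)$ by a standard localization argument à la Persson's formula: the bottom of the essential spectrum of a Schrödinger-type operator equals the limit, as $R\to\infty$, of the infimum of the Rayleigh quotient over functions supported outside $B_R(0)$. Concretely, I would show that for any $u\in X_\la$ supported in $\rz^N\setminus B_R(0)$, one has $\langle L_\la u,u\rangle\geq(\la M_0-\de)\|u\|_{L^2}^2$, since on $\rz^N\setminus B_R(0)$ we have $V(x)\geq M_0$ by \eqref{em2}, hence $V_\la(x)=\la V(x)-\de\geq\la M_0-\de$, while the $|\De u|^2$ term is nonnegative. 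Combined with the compactness of the embedding $H^2(B_R(0))\hookrightarrow L^2(B_R(0))$ (which lets us treat the ``interior'' part $\la V\chi_{B_R}$ as a relatively compact perturbation that does not move the essential spectrum), this yields the claimed lower bound $\inf\sigma_{ess}(L_\la)\geq\la M_0-\de$.

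More carefully, I would proceed as follows. First I would fix $\la>\La_0$ and recall from Lemma \ref{le:em} that $X_\la$ embeds into $H^2(\rz^N)$, so $L_\la$ is a well-defined self-adjoint operator bounded below, acting on $L^2(\rz^N)$ with form domain $X_\la$. Next I would write $L_\la=(\De^2+\la M_0\chi_{\rz^N\setminus B_R(0)}-\de)+W_R$, where $W_R=\la V(x)-\la M_0\chi_{\rz^N\setminus B_R(0)}$ is bounded and supported (modulo the nonnegative tail where $V\geq M_0$) in a way that its negative part is controlled; the operator of multiplication by the compactly-almost-supported difference is $\De^2$-compact because of the compact Sobolev embedding on balls, so by Weyl's theorem the essential spectrum is unchanged by such perturbations. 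Then I would apply Persson's theorem, namely
\beqq
\inf\sigma_{ess}(L_\la)=\lim_{R\to\infty}\inf\left\{\frac{\langle L_\la u,u\rangle}{\|u\|_{L^2}^2}:u\in C_0^\infty(\rz^N\setminus B_R(0)),\;u\neq0\right\},
\eeqq
and observe that for $R$ large enough each such $u$ has $\langle L_\la u,u\rangle=\int(|\De u|^2+V_\la u^2)\geq(\la M_0-\de)\int u^2$, giving the bound. The final assertion $\inf\sigma_{ess}(L_\la)\to+\infty$ is then immediate since $\la M_0-\de\to+\infty$ as $\la\to+\infty$.

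The main obstacle I anticipate is being careful about the functional-analytic setup: one must make precise in which $L^2$-space $L_\la$ acts and verify that $L_\la$ is self-adjoint with the correct form domain, and then justify rigorously that the interior perturbation is relatively compact so Weyl's essential-spectrum stability applies (this uses condition $(V_2)$, which gives that $\Om$ and the set $\{V\leq M_0\}$ are bounded, together with the compactness of $H^2(B_R)\hookrightarrow L^2(B_R)$). An alternative, slightly more hands-on route avoiding explicit invocation of Persson's formula would be to argue directly: suppose $\mu<\la M_0-\de$ lies in $\sigma_{ess}(L_\la)$; then there is a singular Weyl sequence $u_k$ with $\|u_k\|_{L^2}=1$, $u_k\rightharpoonup 0$ in $X_\la$, and $(L_\la-\mu)u_k\to0$; testing against $u_k$ and splitting the integral over $B_R(0)$ and its complement, using that $u_k\to0$ strongly in $L^2(B_R(0))$ by compact embedding while $\int_{\rz^N\setminus B_R(0)}(|\De u_k|^2+(\la M_0-\de)u_k^2)\leq\langle L_\la u_k,u_k\rangle\to\mu$, one reaches the contradiction $\la M_0-\de\leq\mu$. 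I would likely present this direct Weyl-sequence argument, as it keeps the proof self-contained within the framework already built in Lemma \ref{le:em}.
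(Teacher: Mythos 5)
Your final argument (the direct singular Weyl-sequence argument) is correct, but it takes a genuinely different route from the paper. The paper writes $W_\la=\la(V-M_0)$, keeps the nonnegative part $W_{\la,1}=\la(V-M_0)^+$ \emph{inside} the unperturbed operator $H_\la=\De^2+W_{\la,1}+\la M_0-\de$, so that the potential of $H_\la$ is everywhere $\geq\la M_0-\de$ and hence $\sigma(H_\la)\subset[\la M_0-\de,+\infty)$; the leftover $W_{\la,2}=-\la(M_0-V)^+$ is bounded and supported in $B_R(0)$, hence a relatively form compact perturbation, and the classical Weyl stability theorem gives $\sigma_{ess}(L_\la)=\sigma_{ess}(H_\la)$. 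You instead characterize $\sigma_{ess}(L_\la)$ via singular Weyl sequences and use the compact embedding $H^2(B_R)\hookrightarrow L^2(B_R)$ (i.e.\ Lemma \ref{le:em}) to show the mass of such a sequence escapes $B_R$, where $V_\la\geq\la M_0-\de$; this is more self-contained and avoids invoking the abstract perturbation theorem, at the cost of having to be explicit about the self-adjoint realization of $L_\la$ and the form-domain version of Weyl's criterion. Two caveats. First, your intermediate decomposition $L_\la=(\De^2+\la M_0\chi_{\rz^N\setminus B_R}-\de)+W_R$ with $W_R=\la V-\la M_0\chi_{\rz^N\setminus B_R}$ does \emph{not} work as stated: $W_R$ equals $\la(V-M_0)\geq 0$ outside $B_R$ and in general does not decay at infinity (indeed $V-M_0\to V_\infty/2>0$ along the liminf), so it is not a relatively compact perturbation and Weyl's theorem does not apply to that splitting; the fix is exactly the paper's splitting, which absorbs the nonnegative tail into the unperturbed operator. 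Second, if you went the Persson route you would need an IMS-type localization formula for $\De^2$, whose commutator errors involve derivatives of $u$ and are more delicate than in the second-order case; since you ultimately commit to the Weyl-sequence argument this is moot, and there the only point to tighten is that $\langle L_\la u_k,u_k\rangle$ controls $\int_{\rz^N\setminus B_R}\bigl(|\De u_k|^2+(\la M_0-\de)u_k^2\bigr)$ only up to the term $-\de\int_{B_R}u_k^2$, which tends to $0$ by the compact embedding, so the contradiction $\la M_0-\de\leq\mu$ still follows.
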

\proof The proof of this lemma is similar to Proposition $2.3$ in \cite{bpw}. For the convenience of readers, we give the sketch of the proof.

Set $W_\la=V_\la-\la M_0+\de=\la(V(x)-M_0)$ and write $W_{\la,1}=\max\{W_\la,0\},\ W_{\la,2}=\min\{W_\la,0\}$. For $\la>\La_0$ and $W_{\lambda,1}\geq 0,$ we have
\beqq\label{eq:le1}
\sigma(\De^2+W_{\la,1}+\la M_0-\de)\subset[\la M_0-\de,+\infty)
\eeqq
 Let $H_\la=\De^2+W_{\la,1}+\la M_0-\de$, then $L_\la=H_\la+W_{\la,2}$.

We claim that $W_{\la,2}$ is a relative form compact perturbation of $L_\la$. Since $W_{\la,2}$ is bounded, then the form domain of $H_\la$ is the same as the form domain $X_\la$ of $L_\la$. Thus we have to show that
\[
X_\la\mapsto X_\la^*: u\mapsto W_{\la,2}\cdot u\mbox{ is compact.}
\]
Select a bounded sequence $\{u_n\}_{n\geq 1}$ in $X_\la$, then according to Lemma \ref{le:em}, $\{u_n\}_{n\geq 1}$ is also a bounded sequence in $H^2(\rz^N)$. Thus,  for some $u\in H^2(\rz^N)$, up to a subsequence,
\beqq\label{eq:le2}\left\{
\begin{array}{ll}
u_n\rightharpoonup u\quad\hbox{weakly in}\; H^2(\rz^N),\\
u_n\to u\quad\hbox{strongly in}\; L_{loc}^2(\rz^N),\\
u_n\to u\quad\hbox{a.e. in}\; \rz^N
\end{array}\right.
\eeqq
as $n\to+\infty$. According to (\ref{em2}), we know that $\mbox{supp} W_{\la,2}\subset B_R$ for any $\la>\La_0$. Hence by H\"older's inequality, Sobolev inequality and Lemma \ref{le:em}, for any $\la>\La_0$, $v\in X_\la$, we have
\beam\label{eq:le3}
\left|\int_{\rz^N}W_{\la,2}(u_n-u)vdx\right|&=&\left|\int_{B_R}W_{\la,2}(u_n-u)vdx\right|\nn\\
&\leq&\de\int_{B_R}|(u_n-u)v|dx\nn\\
&\leq&\de\left(\int_{B_R}(u_n-u)^2dx\right)^{\frac12}\left(\int_{B_R}v^2dx\right)^{\frac12}\nn\\
&\leq&\de\left(\int_{B_R}(u_n-u)^2dx\right)^{\frac12}\|v\|_{H^2(\rz^N)}\nn\\
&\leq& C\left(\int_{B_R}(u_n-u)^2dx\right)^{\frac12}\|v\|_{\la,0}.
\eeam
By (\ref{eq:le2}) and (\ref{eq:le3}), we obtain
$$\|W_{\la,2}u_n-W_{\la,2}u\|_{X_\la^*}\leq C\left(\int_{B_R}(u_n-u)^2dx\right)^{\frac12}\to 0,$$
as $n\to+\infty$. Thus $W_{\la,2}$ is a relative form compact perturbation of $L_\la$.

On the other hand, according to the classical Weyl theorem (see Example $3$ in \cite{ReedSimon}, page $117$), we see that $\sigma_{ess}(L_\la)=\sigma_{ess}(H_\la)$. By (\ref{eq:le1}), for $\la>\La_0$, we have $$\sigma_{ess}(L_\la)\subset[\la M_0-\de,+\infty).$$
Moreover, $$\inf\sigma_{ess}(L_\la)\to+\infty\mbox{ as }\la\to+\infty.$$
We complete the  proof of this lemma.\hfill{$\Box$}
\vskip8pt
Now we define:  $$\mu(L_\la)=\inf\left\{\int_{\rz^N}(|\De u|^2+V_\la u^2)dx: u\in X_\la,\int_{\rz^N}u^2dx=1\right\},$$
$$
\mu(L_0)=\inf\left\{\int_{\Om}|\De u|^2-\de u^2dx: u\in H^2(\Om)\cap H^1_0(\Om), \int_{\Om}u^2dx=1\right\}.$$
It is easy to see that $\mu(L_0)$ is the principle eigenvalue of $L_0$ and $\mu(L_0)=\mu_0-\de\geq\mu(L_\la)$. According to Lemma \ref{le:ess}, Theorem XXX.1 in \cite{ReedSimon}, we see that $\mu(L_\la)$ is the principle eigenvalue of $L_\la$ for $\la$ large enough. The following Lemma is related to the limit of $\mu(L_\la)$ as $\la\to+\infty$.

\begin{lemma}\label{le:ape} There exists a $\La_1>\La_0$, such that
for any $\la>\La_1$,  $\mu(L_\la)>\frac{\mu(L_0)}{2}$. Moreover,
$$\mu(L_\la)\to\mu(L_0)\mbox{ as }\la\to+\infty.$$
\end{lemma}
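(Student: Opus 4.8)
The plan is to prove the two assertions — the lower bound $\mu(L_\la) > \mu(L_0)/2$ for $\la$ large, and the convergence $\mu(L_\la) \to \mu(L_0)$ — together, the first being an immediate consequence of the second once we know $\mu(L_\la)$ is monotone increasing in $\la$. First I would observe monotonicity: if $\la_1 < \la_2$ then $V_{\la_1}(x) \le V_{\la_2}(x)$ pointwise (since $V \ge 0$), so from the Rayleigh quotient definition $\mu(L_{\la_1}) \le \mu(L_{\la_2})$. Combined with the universal upper bound $\mu(L_\la) \le \mu(L_0) = \mu_0 - \de$ coming from testing with functions supported in $\Om$ (where $V \equiv 0$, so $V_\la = -\de$ there and the $\rz^N$-quotient restricted to $H^2(\Om)\cap H^1_0(\Om)$ reduces to the $\Om$-quotient), we conclude that $\{\mu(L_\la)\}$ is a bounded nondecreasing net, hence has a limit $\beta := \lim_{\la\to\infty}\mu(L_\la) \le \mu(L_0)$. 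The whole game is to prove $\beta \ge \mu(L_0)$; granting that, $\beta = \mu(L_0)$ and, since $\mu(L_0)/2 < \mu(L_0) = \beta$, there is $\La_1 > \La_0$ with $\mu(L_\la) > \mu(L_0)/2$ for $\la > \La_1$.

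For the reverse inequality $\beta \ge \mu(L_0)$ I would argue by extracting a limit of near-minimizers. Take $\la_n \to \infty$ and, by Lemma~\ref{le:ess}, for $n$ large $\mu(L_{\la_n})$ is the principal eigenvalue, so there is a minimizer $u_n \in X_{\la_n}$ with $\int_{\rz^N} u_n^2\,dx = 1$ and $\int_{\rz^N}(|\De u_n|^2 + V_{\la_n} u_n^2)\,dx = \mu(L_{\la_n}) \le \mu(L_0)$ (if one prefers to avoid attainment, take an almost-minimizing sequence instead — the argument is identical). Then $\int_{\rz^N}(|\De u_n|^2 + V_{\la_n}^+ u_n^2)\,dx \le \mu(L_0) + \de\int_{\rz^N} u_n^2\,dx = \mu(L_0) + \de$, so $\|u_n\|_{\la_n,0}$ is bounded, and by Lemma~\ref{le:em} $\{u_n\}$ is bounded in $H^2(\rz^N)$. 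Pass to a subsequence with $u_n \rightharpoonup u$ weakly in $H^2(\rz^N)$, strongly in $L^2_{loc}(\rz^N)$, and a.e.

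The crucial point is to show the weak limit $u$ is supported (essentially) in $\ol\Om$ and lies in $H^2(\Om)\cap H^1_0(\Om)$ with $\int_\Om u^2 = 1$. For localization outside $\Om$: on the open set $\{V > 0\}$ we have, for any compact $K \subset \{V > 0\}$, $\la_n \inf_K V \int_K u_n^2 \le \int_{\rz^N} V_{\la_n}^+ u_n^2\,dx + \de \le \mu(L_0) + \de$, whence $\int_K u_n^2 \to 0$ and so $u = 0$ a.e. on $\{V > 0\}$; combined with $(V_2)$ (so that $\{V=0\}$ differs from $\ol\Om$ by a set irrelevant here — one uses that $\Om$ is the interior of $V^{-1}(0)$ and some care about $\pa\Om$, which is where Remark~\ref{re:zeroset} and smoothness of $\pa\Om$ enter) we get $u \equiv 0$ outside $\ol\Om$. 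That $u \in H^2(\Om)\cap H^1_0(\Om)$ then follows from $u \in H^2(\rz^N)$ vanishing outside $\ol\Om$ together with the trace/boundary regularity of $\pa\Om$. The mass is conserved, $\int_\Om u^2 = 1$: the tail estimate \eqref{em3} gives $\int_{\rz^N\setminus B_R} u_n^2 \le \frac{1}{\la_n M_0}(\mu(L_0)+\de) \to 0$ uniformly, so no mass escapes to infinity, and combined with $\int_K u_n^2 \to 0$ on compacts in $\{V>0\}$ and strong $L^2_{loc}$ convergence we recover $\int_{\rz^N} u^2 = \lim \int_{\rz^N} u_n^2 = 1$. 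Finally, by weak lower semicontinuity of $u \mapsto \int_{\rz^N}|\De u|^2\,dx$ and $V_{\la_n} u_n^2 \ge -\de u_n^2$,
\[
\mu(L_0) \le \int_\Om (|\De u|^2 - \de u^2)\,dx \le \liminf_{n\to\infty} \int_{\rz^N}(|\De u_n|^2 + V_{\la_n} u_n^2)\,dx = \liminf_{n\to\infty}\mu(L_{\la_n}) = \beta,
\]
which closes the loop. The main obstacle is precisely this localization step: proving that \emph{all} the $L^2$-mass of the limit concentrates on $\ol\Om$ and that the limit has the right boundary behavior to be a legitimate test function for $\mu(L_0)$; the combination of the uniform tail bound from \eqref{em3}, the $\la_n$-blow-up of the potential on $\{V>0\}$, and the boundary regularity of $\pa\Om$ is what makes it work, but one must be careful that $V$ need not blow up on $\pa\Om$ itself, so the argument genuinely relies on $u \in H^2(\rz^N)$ plus $u=0$ a.e. on the open set $\rz^N \setminus \ol\Om$ forcing $u \in H^2_0(\Om)$-type membership.
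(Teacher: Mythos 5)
Your proposal is correct and follows essentially the same route as the paper: monotonicity of $\mu(L_\la)$ in $\la$ plus the upper bound $\mu(L_\la)\le\mu(L_0)$, extraction of normalized eigenfunctions bounded in $H^2(\rz^N)$ via Lemma~\ref{le:em}, vanishing of the weak limit on $\{V>0\}$ (the paper uses the exhaustion $C_m=\{V>1/m\}$ where you use compact subsets, which is the same estimate), the tail bound from \eqref{em3} to rule out loss of mass at infinity, and weak lower semicontinuity to close the chain of inequalities. Your extra care about $V^{-1}(0)$ versus $\ol\Om$ and the $H^2(\Om)\cap H^1_0(\Om)$ membership of the limit is a point the paper glosses over, but it does not change the argument.
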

\proof Let $\psi_n\in X_\la$ be the eigenfunction corresponding to $\mu(L_{\la_n})$ such that
\beqq\label{eq:ape1}
\int_{\rz^N}\psi_n^2dx=1, \mbox{ and } \int_{\rz^N}(|\De\psi_n|^2+V_{\la_n}\psi_n^2)dx=\mu(L_{\la_n}).
\eeqq
Then we have
\beao
||\psi_n||_{\la_n,0}^2&=&\int_{\rz^N}(|\De\psi_n|^2+V_{\la_n}\psi_n^2)dx
+\int_{\rz^N}V_{\la_n}^-\psi_n^2dx\\
&=&\mu(L_{\la_n})+\int_{\rz^N}V_{\la_n}^-\psi_n^2dx\leq\mu(L_0)+\de,
\eeao
which implies that  $\{\psi_n\}$ is bounded in $H^2(\rz^N)$. Up to a subsequence, we may assume, for some $\psi\in H^2(\rz^N)$, as $\la_n\to+\infty$,
\beqq\label{eq:ape2}
\left\{
\begin{array}{ll}
\psi_n\rightharpoonup\psi\quad \hbox{weakly in}\; H^2(\rz^N),\\
\psi_n\to\psi\quad \hbox{strongly in}\; L^2_{loc}(\rz^N),\\
\psi_n\to\psi\quad \hbox{a.e. in}\; \rz^N.
\end{array}\right.\eeqq

Firstly, we prove that $\psi\in H^2(\Om)\cap H^1_0(\Om)$. In fact, we just need to verify that $$\psi(x)=0 \mbox{ a.e. in }\rz^N\setminus \Om.$$
For each integer $m\geq1$, we denote $$C_m:=\{x\in \rz^N: V(x)>\frac{1}{m}\}.$$
Now let us fix $m$, and let  $\la_n\to+\infty$, it follows from  (\ref{eq:ape1}) that
\beao
\int_{C_m}\psi^2 dx &\leq& \frac{m}{\la_n}\int_{\rz^N}\la_n V(x)\psi_n^2dx\\
&\leq& \frac{m}{\la_n}\int_{\rz^N}(|\De \psi_n|^2+\la_n V(x)\psi_n^2)dx\\
&\leq&\frac{m}{\la_n}\mu(L_{\la_n})\leq\frac{m}{\la_n}(\mu(L_0)+\de)\to 0.
\eeao
Thus $\psi(x)=0$ a.e. in $C_m$. Notice $\cup_{m=1}^{\infty}C_m=\rz^N\setminus
\Om$, we have $\psi(x)=0$ a.e. in $\rz^N\setminus \Om$.

Secondly, we prove that $\int_{\Omega}\psi^2dx =1.$ In fact, according to (\ref{em2}) and (\ref{eq:ape1}), we have
\beao
\int_{\rz^N\setminus B_R(0)}\psi_n^2dx
&\leq&\frac{1}{M_0\la_n}\int_{\rz^N\setminus B_R(0)}\la_n V(x)\psi_n^2dx\\
&\leq&\frac{1}{M_0\la_n}\int_{\rz^N}(|\De\psi_n|^2+\la_n V(x)\psi_n^2)dx\\
&=&\frac{1}{M_0\la_n}\left(\int_{\rz^N}(|\De\psi_n|^2+ V_{\la_n}\psi_n^2)dx+\de\int_{\rz^N}|\psi_n|^2dx\right)\\
&\leq&\frac{1}{M_0\la_n}(\mu(L_0)+\de)\to 0
\eeao
as $\la_n\to+\infty$. Thus
\beqq\label{eq:ape3}
\lim_{n\to+\infty}\int_{\rz^N\setminus B_R(0)}\psi_n^2dx=0.
\eeqq
Combine (\ref{eq:ape1}), (\ref{eq:ape2}) and (\ref{eq:ape3}), we have
\beao
\int_{\Om}\psi^2dx&=&\lim_{n\to+\infty}\int_{B_R(0)}\psi_n^2dx\\
&=&\lim_{n\to+\infty}\int_{\rz^N}\psi_n^2dx-\lim_{n\to+\infty}\int_{\rz^N\setminus B_R(0)}\psi_n^2dx=1.
\eeao

Finally, we prove $\mu(L_{\la_n})\to\mu(L_0)$ as $n\to+\infty$. In fact, $\psi_n\to\psi$ strongly in $L^2(\rz^N)$ as $n\to+\infty$.
Thus by (\ref{eq:ape1}), we have
\beao
\mu(L_0)&=:&\inf\left\{\int_{\Om}(|\De u|^2-\de u^2)dx: u\in H^2(\Om)\cap H^1_0(\Om),\|u\|_{L^2(\Om)}=1\right\}\\
&\leq&\int_{\Om}(|\De \psi|^2-\de \psi^2)dx\\
&\leq&\lim_{n\to\infty}\int_{\rz^N}(|\De \psi_n|^2+(\la_nV(x)-\de)\psi_n^2)dx\\
&=&\lim_{n\to\infty}\mu(L_{\la_n})\leq \mu(L_0),
\eeao
which implies that $\mu(L_{\la_n})\to \mu(L_0)$ as $n\to\infty$.

Since $\mu(L_\la)$ is increase in $\la$, then $\mu(L_\la)\to \mu(L_0)$ as $\la\to+\infty$. Furthermore, there exists $\La_1>\La_0$ such that for any $\la>\La_1$, we have $\mu(L_\la)>\frac12\mu{(L_0)}>0$.\hfill{$\Box$}\\

Now we prove that $\|\cdot\|_{\la,0}$ and $\|\cdot\|_{\la}$ are equivalent in $X$. Namely,
\begin{lemma}\label{le:en}
For $\la>\La_1$, there exists $C_1>0$ and $C_2>0$ such that for any $u\in X$, we have
\beqq\label{en}
C_1\|u\|_{\la,0}\leq\|u\|_{\la}\leq C_2\|u\|_{\la,0}
\eeqq
where both of $C_1$ and $C_2$ are independent of $\la$.
\end{lemma}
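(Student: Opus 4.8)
The plan is to prove the two estimates separately; the right-hand one is immediate, and the left-hand one is where the spectral input from Lemma~\ref{le:ape} enters. Write $V_\la^-:=\max\{-V_\la,0\}=\max\{\delta-\la V,0\}$, so that $V_\la=V_\la^+-V_\la^-$ and, since $V\ge 0$, $0\le V_\la^-\le\delta$ everywhere. From $V_\la\le V_\la^+$ pointwise it follows that $\|u\|_\la^2=\int_{\rz^N}(|\De u|^2+V_\la u^2)\,dx\le\int_{\rz^N}(|\De u|^2+V_\la^+ u^2)\,dx=\|u\|_{\la,0}^2$, so the right-hand inequality holds with $C_2=1$, independent of $\la$; in particular $\|u\|_\la^2$ is finite on $X$ (it is also bounded below by $-\delta\|u\|_{L^2}^2>-\infty$).

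For the left-hand inequality, I would combine the elementary identity $\|u\|_{\la,0}^2-\|u\|_\la^2=\int_{\rz^N}V_\la^- u^2\,dx$ with the crude bound $\int_{\rz^N}V_\la^- u^2\,dx\le\delta\int_{\rz^N}u^2\,dx=\delta\|u\|_{L^2(\rz^N)}^2$, and then absorb $\|u\|_{L^2}^2$ into $\|u\|_\la^2$ using the uniform positivity of the bottom of the spectrum. Indeed, by Lemma~\ref{le:ape}, for $\la>\La_1$ one has $\mu(L_\la)>\tfrac12\mu(L_0)>0$ (recall $\mu(L_0)=\mu_0-\delta>0$), while by the very definition of $\mu(L_\la)$ as the infimum of the Rayleigh quotient, $\|u\|_\la^2\ge\mu(L_\la)\,\|u\|_{L^2(\rz^N)}^2$ for every $u\in X_\la$. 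Combining these, $\|u\|_{\la,0}^2-\|u\|_\la^2\le\frac{\delta}{\mu(L_\la)}\|u\|_\la^2<\frac{2\delta}{\mu(L_0)}\|u\|_\la^2$, hence $\|u\|_\la^2>\frac{\mu(L_0)}{\mu(L_0)+2\delta}\|u\|_{\la,0}^2$; so the left-hand inequality holds with $C_1=\bigl(\tfrac{\mu(L_0)}{\mu(L_0)+2\delta}\bigr)^{1/2}=\bigl(\tfrac{\mu_0-\delta}{\mu_0+\delta}\bigr)^{1/2}\in(0,1)$, again independent of $\la$. As a by-product $\|u\|_\la^2\ge 0$, so $\|\cdot\|_\la$ is well defined; being comparable to the norm $\|\cdot\|_{\la,0}$ and arising from the symmetric positive semidefinite bilinear form $(u,v)\mapsto\int_{\rz^N}(\De u\,\De v+V_\la uv)\,dx$ (for which Cauchy--Schwarz yields the triangle inequality), it is itself a norm on $X$.

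I do not anticipate a genuine obstacle, since the analytic work is already packaged into Lemmas~\ref{le:em} and~\ref{le:ape}; the only subtle point is the choice of estimate for the defect term $\int V_\la^- u^2$. Bounding $\int_{B_R(0)}u^2$ by $C|B_R|^{4/N}\|\De u\|_{L^2}^2$ via the Sobolev inequality (as was done in the proof of Lemma~\ref{le:em}) would produce a constant $\delta C|B_R|^{4/N}$ that cannot be guaranteed to be $<1$ and hence cannot be absorbed into $\|u\|_\la^2$; it is precisely the $\la$-uniform lower bound $\mu(L_\la)>\tfrac12\mu(L_0)$ on the ground-state energy that makes the argument close with a constant independent of $\la$. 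One could equivalently phrase the above as a contradiction/compactness argument (a normalized sequence $u_k$ with $\|u_k\|_{\la_k,0}=1$, $\|u_k\|_{\la_k}\to0$ would force $\int V_{\la_k}^- u_k^2\to 1$, whence $\int_{B_R(0)}u_k^2$ stays bounded below and $\|u_k\|_{\la_k}^2\ge\mu(L_{\la_k})\|u_k\|_{L^2}^2$ cannot tend to $0$), but the direct computation is cleaner and furnishes the explicit value of $C_1$.
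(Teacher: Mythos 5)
Your proof is correct and takes essentially the same route as the paper: the bound $C_2=1$ from $V_\la\le V_\la^+$, and the lower bound by absorbing $\int V_\la^- u^2\le\delta\|u\|_{L^2}^2$ into $\|u\|_\la^2$ via $\mu(L_\la)>\tfrac12\mu(L_0)$ from Lemma~\ref{le:ape}, arriving at the identical constant $C_1=\sqrt{\mu(L_0)/(\mu(L_0)+2\delta)}$. (In fact the paper writes $\int V_\la^- u^2\,dx=\delta\int u^2\,dx$ as an equality, which is only an inequality in general since $V_\la^-<\delta$ where $V>0$; your version with ``$\le$'' is the correct one and the estimate goes through unchanged.)
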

\begin{proof}
For $\la>\La_1$ and any $u\in X_\la$, we have
\[
\int_{\rz^N}(|\De u|^2+V_\la u^2)dx\geq\mu(L_\la)\int_{\rz^N}u^2dx\geq
\frac{\mu(L_0)}{2}\int_{\rz^N}u^2dx.
\]
Then
\[\barr{ll}
\displaystyle\int_{\rz^N}(|\De u|^2+V^+_\la u^2)dx&=\displaystyle\int_{\rz^N}(|\De u|^2+V_\la u^2)dx+
\int_{\rz^N}V^-_\la u^2dx\vspace{0.2cm}\\
&=\displaystyle\int_{\rz^N}(|\De u|^2+V_\la u^2)dx+\de\int_{\rz^N}u^2dx\vspace{0.2cm}\\
&\leq\displaystyle\frac{\mu(L_0)+2\de}{\mu(L_0)}\int_{\rz^N}(|\De u|^2+V_\la u^2)dx.\earr
\]
It is easy to see that
\[
\int_{\rz^N}(|\De u|^2+V_\la u^2)dx\leq\int_{\rz^N}(|\De u|^2+V^+_\la u^2)dx.
\]
Thus (\ref{en}) holds if we select $C_1=\sqrt{\frac{\mu(L_0)}{\mu(L_0)+2\de}} $ and $C_2=1$.
\end{proof}

\begin{remark}
In the following sections, without especially stated, $X_\la$ denotes the space $(X,\|\cdot\|_\la)$. According to the above lemmas, we know that for $\la>\La_1$, $X_\la$ can be continuously imbedded into $H^2(\rz^N)$ uniformly in $\la$. Moreover, $X_\la$ can be continuously imbedded into $L^p(\rz^N)$ for $2\leq p\leq 2^{**}$ and can be compactly imbedded into $L_{loc}^p(\rz^N)$ for $2<p<2^{**}$. All these embedding constants are independent of $\la$.
\end{remark}
\section{Limit problem}\label{sec:limit}
In this section, we consider the limit problem defined in  $\Omega:=V^{-1}(0)$ as follows:
\begin{equation}\label{lmt}
\begin{cases} \Delta^2 u-\de u=|u|^{p-2}u, &\quad x\in \Omega,\\
u=0,\Delta u=0, &\quad x\in \partial\Omega.
\end{cases}
\end{equation}
We define the corresponding functional  $J_\Omega$ on $H(\Omega):=H^2(\Omega)\cap H_0^1(\Omega)$ by:
\begin{equation}\label{functionalbdd}
J_\Omega(u)=\frac{1}{2}\int_\Omega (|\De u|^2-\de u^2)dx-\frac{1}{p}\int_\Omega |u|^{p}dx.
\end{equation}
And define the Nehari manifold $\N_\Omega$ by
$$\N_\Omega:=\left\{u\in H(\Omega)\setminus\left\{0\right\}:\langle J_\Omega(u), u\rangle=0\right\}.$$

Let $$c(\Omega)=\inf_{\N_\Omega}J_{\Omega}(u).$$

We say  that $u$ is a least energy solution of \eqref{lmt} if $u\in \N_\Omega$ is such that $c(\Omega)$ is achieved. Recall that $\{u_n\}$ is a $(PS)_c$ sequence of $J_\Om$ if $J_\Om(u_n)\to c$ and $J'_\Om(u_n)\to 0$ in $H^*(\Om)$, the dual space of $H(\Om)$, as $n\to +\infty$. $J_\Om$ satisfies the $(PS)_c$ condition if any $(PS)_c$ sequence $\{u_n\}$ contains a convergent subsequence in $H(\Om)$.

\vskip8pt

\begin{remark}\label{re:limit-exis}
By F. Gazzola, H.-Ch. Grunau and G. Sweers \cite[Theorem 2.31]{GGS}, $\|\De \cdot\|_{L^2(\Om)} $is a complete norm  of $H(\Omega)$.
\end{remark}
\begin{lemma}\label{le:limit1} For $2<p<2^{**}$, $N\geq 5$, then $c(\Omega)$ is achieved by a nontrivial solution $u$ of \eqref{lmt} in $\N_\Omega$.
\end{lemma}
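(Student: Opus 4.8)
The plan is to obtain $c(\Omega)$ as a minimum over the Nehari manifold $\N_\Omega$ by a direct-method argument, exploiting the compact Sobolev embedding $H(\Omega)=H^2(\Omega)\cap H_0^1(\Omega)\hookrightarrow\hookrightarrow L^p(\Omega)$ for $2<p<2^{**}$ that is available on the bounded smooth domain $\Omega$. First I would record the standard structural facts about $\N_\Omega$: using $(V_3)$, i.e. $\delta<\mu_0$, the quadratic form $\|u\|_\Omega^2:=\int_\Omega(|\Delta u|^2-\delta u^2)\,dx$ is positive definite and, by Remark \ref{re:limit-exis}, equivalent to $\|\Delta\cdot\|_{L^2(\Omega)}^2$, hence a genuine norm on $H(\Omega)$. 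For $u\neq 0$ the fibering map $t\mapsto J_\Omega(tu)$ has a unique positive critical point $t(u)$, so $\N_\Omega\neq\emptyset$, every nonzero $u$ can be projected onto it, and on $\N_\Omega$ one has $J_\Omega(u)=\left(\tfrac12-\tfrac1p\right)\|u\|_{L^p(\Omega)}^p=\left(\tfrac12-\tfrac1p\right)\|u\|_\Omega^2$. This gives $c(\Omega)>0$ (a positive lower bound follows by combining $\langle J_\Omega'(u),u\rangle=0$ with the Sobolev inequality $\|u\|_{L^p}^p\le C\|u\|_\Omega^p$, forcing $\|u\|_\Omega\ge c_0>0$ on $\N_\Omega$), and shows $\N_\Omega$ is bounded away from $0$.

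Next I would take a minimizing sequence $\{u_n\}\subset\N_\Omega$, $J_\Omega(u_n)\to c(\Omega)$. The identity $J_\Omega(u_n)=\left(\tfrac12-\tfrac1p\right)\|u_n\|_\Omega^2$ shows $\{u_n\}$ is bounded in $H(\Omega)$; passing to a subsequence, $u_n\rightharpoonup u$ weakly in $H(\Omega)$ and, by compactness of the embedding, $u_n\to u$ strongly in $L^p(\Omega)$. From $\|u_n\|_\Omega^2=\|u_n\|_{L^p}^p$ and the strong $L^p$ convergence together with $\|u_n\|_\Omega\ge c_0$, one gets $\|u\|_{L^p}^p=\lim\|u_n\|_{L^p}^p\ge c_0^p>0$, so $u\neq 0$. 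Now project: let $t=t(u)>0$ be such that $tu\in\N_\Omega$. Using weak lower semicontinuity of $\|\cdot\|_\Omega$ and $\|u_n\|_\Omega^2=\|u_n\|_{L^p}^p\to\|u\|_{L^p}^p$, one checks $t\le 1$; then
\[
c(\Omega)\le J_\Omega(tu)=\left(\tfrac12-\tfrac1p\right)t^2\|u\|_\Omega^2\le\left(\tfrac12-\tfrac1p\right)\|u\|_\Omega^2\le\left(\tfrac12-\tfrac1p\right)\liminf_n\|u_n\|_\Omega^2=c(\Omega),
\]
so all inequalities are equalities, forcing $t=1$, $u\in\N_\Omega$, and $J_\Omega(u)=c(\Omega)$. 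Finally, a minimizer on the Nehari manifold is a critical point of $J_\Omega$: since $\N_\Omega$ is a $C^1$ manifold of codimension one on which $\langle J_\Omega'(u),u\rangle<0$ away from $0$ (the transversality/non-degeneracy condition), the Lagrange multiplier vanishes, so $J_\Omega'(u)=0$ and $u$ solves \eqref{lmt} weakly; elliptic regularity then gives a genuine (nontrivial) solution.

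The main obstacle is ensuring $u\neq 0$ in the weak limit and correctly handling the projection step $t(u)\le 1$; this is where the compactness of $H(\Omega)\hookrightarrow L^p(\Omega)$ (valid because $\Omega$ is bounded and $p<2^{**}$, so there is no critical-exponent loss of compactness here) and the lower bound $\inf_{\N_\Omega}\|u\|_\Omega>0$ are both essential. A secondary technical point is verifying that $\|\Delta\cdot\|_{L^2(\Omega)}$ is equivalent to the form norm $\|\cdot\|_\Omega$ and controls the full $H^2$-norm on $H(\Omega)$ — but this is exactly what Remark \ref{re:limit-exis} and \cite[Theorem 2.31]{GGS} provide, together with $\delta<\mu_0$.
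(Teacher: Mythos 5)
Your proposal is correct, and it reaches the conclusion by a somewhat different route than the paper. The paper's proof is a two-line sketch: apply Ekeland's variational principle to get a $(PS)_{c(\Omega)}$ sequence $\{u_n\}\subset\N_\Omega$ with $J_\Omega(u_n)\to c(\Omega)$ and $J'_\Omega(u_n)\to 0$ in $H^*(\Omega)$, then invoke the norm equivalence of Remark \ref{re:limit-exis} together with the compact embedding $H(\Omega)\hookrightarrow L^p(\Omega)$ to verify the Palais--Smale condition and pass to a strong limit $u\in\N_\Omega$ with $J_\Omega(u)=c(\Omega)$ and $J'_\Omega(u)=0$. You instead run the direct method on the Nehari manifold: minimizing sequence, weak lower semicontinuity of $\|\cdot\|_\Omega$, strong $L^p$ convergence, the lower bound $\inf_{\N_\Omega}\|\cdot\|_\Omega>0$ to rule out $u=0$, the projection $t(u)\le 1$ to force $u\in\N_\Omega$ with $J_\Omega(u)=c(\Omega)$, and finally a vanishing Lagrange multiplier (using $\langle G'(u),u\rangle=(2-p)\|u\|_\Omega^2<0$ where $G(u)=\langle J'_\Omega(u),u\rangle$) to get $J_\Omega'(u)=0$. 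Both arguments rest on the same two ingredients --- positivity of the quadratic form via $(V_3)$ and compactness of $H(\Omega)\hookrightarrow L^p(\Omega)$ for $p<2^{**}$ --- but your version is more self-contained: it avoids the step, glossed over in the paper, of converting Ekeland's constrained almost-critical points on $\N_\Omega$ into a genuine $(PS)$ sequence for the unconstrained functional (which itself requires the same non-degeneracy of $\N_\Omega$ you establish). The paper's $(PS)$-sequence framework, on the other hand, is the one that carries over to the critical case $p=2^{**}$ in Lemma \ref{le:limit2}, which is presumably why the authors set it up that way. Two trivial slips in your write-up: the lower bound should read $\lim_n\|u_n\|_{L^p(\Omega)}^p=\lim_n\|u_n\|_\Omega^2\ge c_0^2$ (not $c_0^p$), and the transversality condition is $\langle G'(u),u\rangle<0$ rather than $\langle J_\Omega'(u),u\rangle<0$ (the latter vanishes on $\N_\Omega$ by definition); neither affects the argument.
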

\proof Since the proof is quite standard, for the convenience of the reader, we  give the sketech of the proof.

Indeed, from the definition of $c(\Omega)$ and thanks to Ekeland's Variational Principle, we know that there is a sequence $\{u_n\}\subset \N_\Omega$ such that
\beqq\label{eq:pssequence}
J_\Omega(u_n)\to c(\Omega) \hbox{ and }J'_\Omega(u_n)\to 0 \hbox{ in }H^*(\Omega).
\eeqq
Thus by Remark \ref{re:limit-exis} and the fact that $H(\Omega)\hookrightarrow L^p(\Omega)$ is compact , we immediately obtain that $J_\Omega(u_n)$ satisfies Palais-Smale condition. Namely, \eqref{eq:pssequence} indicates that there is a subsequence of $\{u_n\}$, still denoted by $\{u_n\}$, and $u\in N_\Omega$ such that
$u_n\to u$ in $H(\Omega)$ and
$$
J_\Omega(u)=c(\Omega) ,\quad J'_\Omega(u)=0
$$
which complete the proof of this  lemma.\hfill{$\Box$}
\vskip8pt
Now we focus on the existence of least energy solutions of \eqref{lmt} in critical case. Firstly we have the following estimate for the least energy $c(\Omega)$ when $p=2^{**}.$
\begin{lemma}
Suppose $N\geq 8$, $p=2^{**}$, and $\de>0$, we have
\[c(\Om)<\frac{2}{N}S^{\frac{N}{4}}.\]
\end{lemma}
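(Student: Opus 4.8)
The plan is to reduce the estimate to the strict inequality $S_\de(\Om)<S$ between two Sobolev-type constants, and then to verify that inequality by a Brezis--Nirenberg style truncation of the extremals of the biharmonic Sobolev inequality on $\rz^N$. First, since $p=2^{**}$ and $0<\de<\mu_0$, the quadratic form $Q(u):=\int_\Om(|\De u|^2-\de u^2)\,dx$ is positive on $H(\Om)\setminus\{0\}$ (because $\int_\Om u^2\le\mu_0^{-1}\int_\Om|\De u|^2$), so maximizing $t\mapsto J_\Om(tu)$ over $t>0$ gives $\max_{t>0}J_\Om(tu)=\frac2N\big(Q(u)\,\|u\|_{L^{2^{**}}(\Om)}^{-2}\big)^{N/4}$, and the standard fibering/Nehari identity yields $c(\Om)=\inf_{u\neq0}\max_{t>0}J_\Om(tu)=\frac2N S_\de(\Om)^{N/4}$, where
\[
S_\de(\Om):=\inf_{u\in H(\Om)\setminus\{0\}}\frac{\int_\Om(|\De u|^2-\de u^2)\,dx}{\big(\int_\Om|u|^{2^{**}}\,dx\big)^{2/2^{**}}},\qquad S:=\inf_{0\ne u\in D^{2,2}(\rz^N)}\frac{\int_{\rz^N}|\De u|^2\,dx}{\big(\int_{\rz^N}|u|^{2^{**}}\,dx\big)^{2/2^{**}}}.
\]
Hence it suffices to prove $S_\de(\Om)<S$.

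Recall that $S$ is attained on $\rz^N$ by the explicit extremal family $U_\va(x)=\alpha_N\big(\va/(\va^2+|x|^2)\big)^{(N-4)/2}$, normalized so that $\De^2U_\va=U_\va^{2^{**}-1}$ and $\int_{\rz^N}|\De U_\va|^2=\int_{\rz^N}U_\va^{2^{**}}=S^{N/4}$. I would fix $x_0\in\Om$, choose $r>0$ with $B_{2r}(x_0)\subset\Om$ and $\eta\in C_c^\infty(B_{2r}(x_0))$ with $0\le\eta\le1$, $\eta\equiv1$ on $B_r(x_0)$, and test with $u_\va:=\eta\,U_\va(\cdot-x_0)\in C_c^\infty(\Om)\subset H(\Om)$ (so the Navier conditions are trivially met). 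Using $\De(\eta U_\va)=\eta\De U_\va+2\na\eta\cdot\na U_\va+U_\va\De\eta$ together with the bound $U_\va(x-x_0)\le C\va^{(N-4)/2}|x-x_0|^{4-N}$ for $|x-x_0|\ge r$, the classical truncated-instanton computations give, as $\va\to0$,
\[
\int_\Om|\De u_\va|^2\,dx\le S^{N/4}+C\va^{N-4},\qquad \int_\Om|u_\va|^{2^{**}}\,dx\ge S^{N/4}-C\va^{N},
\]
and, splitting $\int u_\va^2$ over $B_\va(x_0)$ and over $B_{2r}(x_0)\setminus B_\va(x_0)$ (where the radial integral $\int_\va^{2r}\rho^{7-N}\,d\rho$ appears and governs the order),
\[
\int_\Om u_\va^2\,dx\ \ge\ c\,\va^4\quad(N\ge9),\qquad \int_\Om u_\va^2\,dx\ \ge\ c\,\va^4|\log\va|\quad(N=8),
\]
with constants $C,c>0$ independent of $\va$.

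Plugging these into the Rayleigh quotient and expanding the denominator,
\[
S_\de(\Om)\ \le\ \frac{S^{N/4}+C\va^{N-4}-\de\int_\Om u_\va^2\,dx}{\big(S^{N/4}-C\va^N\big)^{(N-4)/N}}\ =\ S+\frac{C\va^{N-4}-\de\int_\Om u_\va^2\,dx}{S^{(N-4)/4}}+O(\va^N).
\]
For $N\ge9$ one has $N-4>4$ and $\int u_\va^2\ge c\va^4$, so $C\va^{N-4}-\de\int u_\va^2<0$ once $\va$ is small; for $N=8$ the logarithmic gain makes $\de\int u_\va^2\ge\de c\,\va^4|\log\va|$ dominate $C\va^{N-4}=C\va^4$ for small $\va$. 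In either case $S_\de(\Om)<S$, hence $c(\Om)=\frac2N S_\de(\Om)^{N/4}<\frac2N S^{N/4}$. The delicate part is the second step: one must check that the cross terms coming from $\De(\eta U_\va)$ contribute only at order $\va^{N-4}$ to the numerator and, above all, pin down the exact order of the mass term $\int_\Om u_\va^2$, since the whole argument works precisely because this term outweighs the truncation error $O(\va^{N-4})$ --- and this is exactly where the hypothesis $N\ge8$ is used (for $5\le N\le7$ the mass term is itself only of order $\va^{N-4}$ and this test function no longer decides the sign).
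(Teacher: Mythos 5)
Your proposal is correct and follows essentially the same route as the paper: a truncated biharmonic instanton $\eta\,U_\va$ supported in $\Om$, the estimates $\int|\De u_\va|^2=S^{N/4}+O(\va^{N-4})$, $\int|u_\va|^{2^{**}}=S^{N/4}+O(\va^N)$, and the lower bound $\int u_\va^2\gtrsim\va^4$ (with the extra $|\log\va|$ at $N=8$) that beats the truncation error precisely when $N\ge8$. The only cosmetic difference is that you package the conclusion as $S_\de(\Om)<S$ via the fibering identity $c(\Om)=\frac2N S_\de(\Om)^{N/4}$, whereas the paper plugs the Nehari projection $t_\va u_\va$ directly into $J_\Om$; the two computations are identical.
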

\proof It is well known that $S$ can be achieved by
\[U_\epsilon=c\left(\frac{\epsilon}{\epsilon^2+|x|^2}\right)^{\frac{N-4}{2}}\]
for each $\epsilon>0$ and $c$ is a constant depend on $N$. We my assume $0\in\Om$. Let $\eta$ be a smooth cut-off function satisfies that
\[
\eta(x)=1 \mbox{ for }x\in B_r(0)\,\,\mbox{and}\,\,\mbox{supp}\eta\subset\Om.
\]
Define $u_\epsilon(x)=\eta(x)U_\epsilon(x)\in H(\Omega)$. By direct calculation, we have
$$
\int_{\Om}|\De u_\epsilon|^2dx=\int_{B_r(0)}|\De U_\epsilon|^2dx+\int_{\rz^N\setminus B_r(0)}|\De u_\epsilon|^2dx=S^{\frac{N}{4}}+O(\epsilon^{N-4}),
$$

$$\int_{\Om}|u_\epsilon|^{2^{**}}dx=\int_{B_r(0)}|U_\epsilon|^{2^{**}}dx+\int_{\rz^N\setminus B_r(0)}|u_\epsilon|^{2^{**}}dx=S^{\frac{N}{4}}+O(\epsilon^N),
$$
and
\beao
\displaystyle\int_{\Om}|u_\epsilon|^{2}dx&=&\displaystyle\int_{B_\epsilon(0)}
|U_\epsilon|^{2}dx+\int_{B_r(0)\setminus B_\epsilon(0)}|U_\epsilon|^{2}dx+\int_{\Om\setminus B_r(0)}|u_\epsilon|^{2}dx\\
&\geq&\displaystyle c^2\int_{B_\epsilon(0)}\left(\frac{\epsilon}{\epsilon^2+\epsilon^2}\right)^{N-4}dx
+c^2\int_{B_r(0)\setminus B_\epsilon(0)}\left(\frac{\epsilon}{|x|^2+|x|^2}\right)^{N-4}\\
&&\displaystyle\quad+c^2\epsilon^{N-4}\int_{\Om\setminus B_r(0)}\eta^2\frac{1}{(\epsilon^2+|x|^2)^{N-4}}dx\\
&\geq&\displaystyle\left\{\barr{ll}d\epsilon^4|\ln{\epsilon}|+O(\epsilon^4),
\vspace{0.2cm}&\mbox{ if }N=8,\\
d\epsilon^4+O(\epsilon^{N-4}), &\mbox{ if }N\geq9.\earr\right.
\eeao
Select $t_\epsilon>0$ such that $t_\epsilon u_\epsilon\in \N_\Om$. Thus
\[
t^2_\epsilon\left(\int_{\Om}(|\De u_\epsilon|^2-\de u^2_\epsilon)dx\right)
=t^{2^{**}}_\epsilon\int_{\Om}|u_\epsilon|^{2^{**}}dx.
\]
This implies
\[t_\epsilon=\left(\frac{\int_{\Om}(|\De u_\epsilon|^2-\de u^2_\epsilon )dx}{\int_{\Om}|u_\epsilon|^{2^{**}}dx}\right)^{\frac{1}{2^{**}-2}}.\]
Therefore, for $\epsilon>0$ small enough, we have
\beao
c(\Om)&\leq&\displaystyle J_\Om(t_\epsilon u_\epsilon)\\
&=&\displaystyle\Big(\frac12-\frac{1}{2^{**}}\Big)t^2_\epsilon\int_{\Om}(|\De u_\epsilon|^2-\de u^2_\epsilon)dx\\
&=&\displaystyle\frac2N\frac{(\int_{\Om}(|\De u_\epsilon|^2-\de u^2_\epsilon )dx)^{\frac{N}{4}}}{(\int_{\Om}|u_\epsilon|^{2^{**}}dx)^{\frac{N-4}{4}}}\\
&<&\frac{2}{N}S^{\frac{N}{4}}.
\eeao

\begin{lemma}\label{le:limit2} For $p=2^{**}$, $N\geq8$, $c(\Omega)$ is achieved by a nontrivial solution $u$ of \eqref{lmt} in $\N_\Omega$.\end{lemma}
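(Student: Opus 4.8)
This is a Brezis--Nirenberg type argument, and the decisive input is the strict inequality $c(\Om)<\frac{2}{N}S^{\frac{N}{4}}$ from the preceding lemma, which puts the relevant level below the threshold where compactness of Palais--Smale sequences is lost. First I would, exactly as in the proof of Lemma \ref{le:limit1}, apply Ekeland's variational principle on $\N_\Om$ to produce $\{u_n\}\subset\N_\Om$ with $J_\Om(u_n)\to c(\Om)$ and $J'_\Om(u_n)\to 0$ in $H^*(\Om)$. Since $\de<\mu_0$, the form $u\mapsto\int_\Om(|\De u|^2-\de u^2)$ is coercive on $H(\Om)$ and equivalent to $\|\De\cdot\|_{L^2(\Om)}^2$ (using Remark \ref{re:limit-exis} and $(V_3)$); combined with the identity $J_\Om(u_n)-\frac1p\langle J'_\Om(u_n),u_n\rangle=(\frac12-\frac1p)\int_\Om(|\De u_n|^2-\de u_n^2)$ this shows $\{u_n\}$ is bounded in $H(\Om)$. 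Passing to a subsequence, $u_n\rightharpoonup u$ in $H(\Om)$, $u_n\to u$ in $L^q(\Om)$ for every $2\le q<2^{**}$ by the compact embedding $H(\Om)\hookrightarrow L^q(\Om)$, and $u_n\to u$ a.e.; the standard passage to the limit in $\langle J'_\Om(u_n),\varphi\rangle=0$ then gives $J'_\Om(u)=0$, i.e. $u$ is a weak solution of \eqref{lmt}.

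The heart of the matter, and the step I expect to be hardest, is to show $u\neq 0$. Put $v_n:=u_n-u\rightharpoonup 0$. From $\De u_n\rightharpoonup\De u$ in $L^2(\Om)$ one has $\int_\Om|\De u_n|^2=\int_\Om|\De u|^2+\int_\Om|\De v_n|^2+o(1)$, from the Brezis--Lieb lemma $\int_\Om|u_n|^{2^{**}}=\int_\Om|u|^{2^{**}}+\int_\Om|v_n|^{2^{**}}+o(1)$, and $\int_\Om u_n^2\to\int_\Om u^2$ by compactness. Inserting these into $J_\Om(u_n)\to c(\Om)$ and $\langle J'_\Om(u_n),u_n\rangle\to 0$, and using $J'_\Om(u)=0$, I obtain
\[
\int_\Om|\De v_n|^2-\int_\Om|v_n|^{2^{**}}\to 0,\qquad
J_\Om(u)+\Big(\tfrac12-\tfrac1{2^{**}}\Big)\int_\Om|\De v_n|^2\to c(\Om).
\]
Let $\ell:=\lim_n\int_\Om|\De v_n|^2$ along a further subsequence; then also $\int_\Om|v_n|^{2^{**}}\to\ell$, and the Sobolev inequality $\int_\Om|\De v_n|^2\ge S\big(\int_\Om|v_n|^{2^{**}}\big)^{2/2^{**}}$ forces in the limit $\ell=0$ or $\ell\ge S^{\frac{N}{4}}$. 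Now assume for contradiction that $u=0$, so $J_\Om(u)=0$ and hence $c(\Om)=\frac{2}{N}\ell$ by the second relation above. If $\ell\ge S^{\frac{N}{4}}$ then $c(\Om)\ge\frac{2}{N}S^{\frac{N}{4}}$, contradicting the preceding lemma. If $\ell=0$ then $c(\Om)=0$; but on $\N_\Om$ one has $\int_\Om|w|^{2^{**}}=\int_\Om(|\De w|^2-\de w^2)\ge c_0\big(\int_\Om|w|^{2^{**}}\big)^{2/2^{**}}$ by coercivity and the Sobolev inequality, so $\int_\Om|w|^{2^{**}}$ is bounded below by a positive constant on $\N_\Om$, whence $c(\Om)=\frac{2}{N}\inf_{w\in\N_\Om}\int_\Om|w|^{2^{**}}>0$, again a contradiction. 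Therefore $u\neq 0$.

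It remains to verify that $u$ realizes $c(\Om)$. Since $u\neq 0$ and $\langle J'_\Om(u),u\rangle=0$, we have $u\in\N_\Om$, hence $J_\Om(u)\ge c(\Om)$; conversely the second displayed relation together with $\ell\ge 0$ gives $J_\Om(u)=c(\Om)-\frac{2}{N}\ell\le c(\Om)$. Thus $J_\Om(u)=c(\Om)$ (and, incidentally, $\ell=0$, so in fact $u_n\to u$ strongly in $H(\Om)$), so $u$ is the desired nontrivial least energy solution of \eqref{lmt}. All steps except the exclusion of $u=0$ are routine; that step is precisely where the hypothesis $N\ge 8$ and the strict bound $c(\Om)<\frac{2}{N}S^{\frac{N}{4}}$ enter.
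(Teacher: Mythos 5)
Your proposal is correct and follows essentially the same route as the paper: Ekeland's principle to produce a bounded $(PS)_{c(\Om)}$ sequence, the Br\'ezis--Lieb splitting $v_n=u_n-u$, and the dichotomy from the Sobolev inequality played against the strict bound $c(\Om)<\frac{2}{N}S^{\frac{N}{4}}$ to kill the defect of compactness. The only difference is organizational (you first exclude $u=0$ and then deduce $\ell=0$, whereas the paper assumes the defect $b>0$ and derives the contradiction $S^{\frac{N}{4}}\le b<S^{\frac{N}{4}}$ directly), and your version is if anything slightly more careful about the case $\ell=0$ and the positivity of $c(\Om)$.
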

\proof: By Ekeland's Variational principle and the definition of $c(\Om)$, we can easily get a $(PS)_{c(\Om)}$ sequence $\{u_n\}$. Moreover, $\{u_n\}$ is bounded in $H(\Omega)$. Then up to a subsequence, we have
\[\left\{\begin{array}{ll}
u_n\rightharpoonup u\quad \hbox{in}\; H(\Omega),\\
u_n\rightharpoonup u\quad \hbox{in}\; L^{2^{**}}(\Om),\\
u_n\to u\quad \hbox{in}\; L^2(\Om).
\end{array}\right.\]
Let $v_n=u_n-u$, by Br\'ezis-Lieb's Lemma, we have
\[\int_{\Om}|\De u_n|^2dx=\int_{\Om}|\De u|^2dx+\int_{\Om}|\De v_n|^2dx+o(1),\] \[\int_{\Om}|u_n|^{2^{**}}dx=\int_{\Om}|u|^{2^{**}}dx+\int_{\Om}|v_n|^{2^{**}}dx+o(1).\]
By direct calculation, we obtain that
\[J_\Om(u_n)=J_\Om(u)+\frac12\int_{\Om}|\De v_n|^2dx-\frac{1}{2^{**}}\int_{\Om}|v_n|^{2^{**}}dx+o(1),\]
and
\[\langle J'_\Om(u_n),u_n\rangle=\langle J'_\Om(u),u\rangle+\int_{\Om}|\De v_n|^2dx-\frac{1}{2^{**}}\int_{\Om}|v_n|^{2^{**}}dx+o(1).\]
It is easy to see that $J'_\Om(u)=0$ and $J_\Om(u)\geq 0$. We may assume that $$b=\lim_{n\to+\infty}\int_{\Om}|\De v_n|^2dx=\lim_{n\to+\infty}\int_{\Om}|v_n|^{2^{**}}dx>0.$$
 On one hand, \[b=\lim_{n\to+\infty}\int_{\Om}|v_n|^{2^{**}}dx=\lim_{n\to+\infty}\int_{\Om}|\De v_n|^2dx\geq S\lim_{n\to+\infty}\left(\int_{\Om}|v_n|^{2^{**}}dx\right)^{\frac{2}{2^{**}}}=Sb^{\frac{2}{2^{**}}},\]
 which implies that $b\geq S^{\frac{N}{4}}$. On  the other hand,\[\frac{2}{N}S^{\frac{N}{4}}>c(\Om)\geq\frac12\lim_{n\to+\infty}\int_{\Om}|\De v_n|^2dx-\frac{1}{2^{**}}\lim_{n\to+\infty}\int_{\Om}|v_n|^{2^{**}}dx=\frac{2}{N}b,\]
which implies  $b<S^{\frac{N}{4}},$ this leads to a contradiction. Therefore, $u_n\to u \mbox{ in } H(\Om)$ and $u$ is an achieved function of $c(\Om)$.\hfill{$\Box$}

\section{Biharmonic equation with potential well}\label{sec:els}
In this section, we study the existence of least energy solutions for \eqref{maineq} both in subcritical and critical cases. In Subsection \ref{sec:Ps}, we present some properties of the $(PS)_c$ sequence of $J_\lambda(u)$. In Subsection \ref{sec:esubc} and  Subsection \ref{sec:ecri}, we prove the existence of least energy solutions to \eqref{maineq} in subcritical case and critical case respectively.

\subsection{Properties of $(PS)_c$ sequence}\label{sec:Ps}
Recall  that $\left\{u_n\right\}\subset X_\lambda$ is called a
$(PS)_c$ sequence for the functional
$J_\lambda(u)$ if  $$J_\lambda(u_n)\to c\mbox{ and }
J'_\lambda(u_n)\to 0\,\,\mbox{in} X_\lambda^*\,\,\mbox{as}\,\,n\to \infty ,$$ where $X_\lambda^*$ is  the dual space of
$X_\lambda.$  We say that the functional
$J_\lambda(u)$ satisfies $(PS)_c$ condition if any of the $(PS)_c$
sequence $\left\{u_n\right\}$, up to a subsequence, converges
strongly in $X_\lambda.$

\begin{lemma}\label{le:psbound} Suppose $2<p<2^{**}$ if $N\geq5$ and $p=2^{**}$ if $N\geq8$. For $\la>\La_1$, if $\left\{u_n\right\}$ is a $(PS)_c$ sequence for $J_\lambda(u),$  then
\beqq\label{eq:un-upperbound}
\lim_{n\to \infty}\|u_n\|^2_\lambda=\frac{2pc}{p-2}.
\eeqq
\end{lemma}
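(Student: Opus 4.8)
The plan is to run the standard Nehari-type manipulation: play the energy information $J_\lambda(u_n)\to c$ against the derivative information $\langle J_\lambda'(u_n),u_n\rangle\to 0$ so as to solve for $\|u_n\|_\lambda^2$, after first establishing that $\{u_n\}$ is bounded in $X_\lambda$.

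First I would record the two elementary identities
$$J_\lambda(u_n)=\frac12\|u_n\|_\lambda^2-\frac1p\int_{\rz^N}|u_n|^p\,dx,\qquad \langle J_\lambda'(u_n),u_n\rangle=\|u_n\|_\lambda^2-\int_{\rz^N}|u_n|^p\,dx,$$
which follow directly from \eqref{functional} and the formula for $J_\lambda'$, using that for $\lambda>\Lambda_1$ the quantity $\|\cdot\|_\lambda$ is a genuine (in particular nonnegative) norm on $X$ by Lemma~\ref{le:ape} and Lemma~\ref{le:en}, so that $\|u_n\|_\lambda^2=\int_{\rz^N}(|\De u_n|^2+V_\lambda u_n^2)\,dx$. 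Eliminating the term $\int|u_n|^p$ by subtracting $\tfrac1p$ times the second identity from the first gives
$$\frac{p-2}{2p}\,\|u_n\|_\lambda^2=J_\lambda(u_n)-\frac1p\langle J_\lambda'(u_n),u_n\rangle=c+o(1)+o(\|u_n\|_\lambda),$$
where I have used $J_\lambda(u_n)=c+o(1)$ together with $|\langle J_\lambda'(u_n),u_n\rangle|\le\|J_\lambda'(u_n)\|_{X_\lambda^*}\|u_n\|_\lambda=o(\|u_n\|_\lambda)$.

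Next I would extract boundedness from the last display. If $\|u_n\|_\lambda\to\infty$ along a subsequence, then dividing by $\|u_n\|_\lambda^2$ forces the left-hand side to tend to $\frac{p-2}{2p}>0$ (this is where $p>2$ enters) while the right-hand side tends to $0$, a contradiction. Hence $\{u_n\}$ is bounded in $X_\lambda$, the remainder $o(\|u_n\|_\lambda)$ improves to $o(1)$, and the identity becomes $\frac{p-2}{2p}\|u_n\|_\lambda^2=c+o(1)$; letting $n\to\infty$ yields $\lim_{n\to\infty}\|u_n\|_\lambda^2=\frac{2pc}{p-2}$, which is the assertion.

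There is no genuinely hard step in this lemma; the only point that needs care is that $\|\cdot\|_\lambda$ be positive definite, i.e. that $\mu(L_\lambda)>0$, which is precisely why the hypothesis $\lambda>\Lambda_1$ is imposed and is supplied by Lemma~\ref{le:ape}. I note also that the restriction on $p$ and $N$ in the statement (subcritical if $N\ge5$, critical if $N\ge8$) plays no role in this argument — it is needed only later for the compactness of $(PS)$ sequences — but carrying it here does no harm.
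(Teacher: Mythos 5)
Your proof is correct and follows essentially the same route as the paper: subtract $\tfrac1p\langle J_\lambda'(u_n),u_n\rangle$ from $J_\lambda(u_n)$ to obtain $\tfrac{p-2}{2p}\|u_n\|_\lambda^2=c+o(1)+o(\|u_n\|_\lambda)$ and conclude. You merely spell out the boundedness step (dividing by $\|u_n\|_\lambda^2$ to rule out $\|u_n\|_\lambda\to\infty$) that the paper leaves implicit in the phrase ``which immediately implies.''
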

\proof Since $\left\{u_n\right\}$ is a $(PS)_c$ sequence, then for $\la>\La_1$, we have
\beao
&&\displaystyle c+o(1)+o(1)\|u_n\|_\lambda\nn\\
&=&J_\lambda(u_n)-\frac{1}{p}\langle
J'_\lambda(u_n),u_n\rangle\nn\\
&=&\displaystyle\Big(\frac{1}{2}-\frac{1}{p}\Big)\int_{\rz^N}(|\De u_n|^2+V_\lambda u_n^2)dx\nn\\
&=&\dis\frac{p-2}{2p}||u_n||^2_\lambda
\eeao
which immediately implies \eqref{eq:un-upperbound}.\hfill{$\Box$}

\begin{lemma}\label{0isolated}Suppose $2<p<2^{**}$ if $N\geq 5$ and $p=2^{**}$ if $N\geq 8$. For $\la>\La_1$, if $\{u_n\}$ is a $(PS)_c$ sequence for $J_\la(u)$, then one of the following statements holds:
\begin{itemize}
\item[$(i)$]$\liminf\limits_{n\to+\infty}\int_{\rz^N}|u_n|^pdx=0$;
\item[$(ii)$] There exists $\sigma>0$ which is independent of
 $\lambda$ such that
 $$\liminf_{n\to+\infty}\int_{\rz^N}|u_n|^pdx\geq\sigma.$$
\end{itemize}
\end{lemma}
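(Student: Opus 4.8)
The plan is to exploit the boundedness of the $(PS)_c$ sequence together with the Sobolev embedding that is uniform in $\la$, and then turn the relation $\langle J'_\la(u_n),u_n\rangle\to 0$ into a self-improving inequality for $a_n:=\int_{\rz^N}|u_n|^p\,dx$ that forces a gap at $0$.

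First I would record that $\{u_n\}$ is bounded in $X_\la$, which is immediate from Lemma \ref{le:psbound} (it gives $\|u_n\|_\la^2\to\frac{2pc}{p-2}$). Since $J'_\la(u_n)\to 0$ in $X_\la^*$ and $\{u_n\}$ is bounded, $|\langle J'_\la(u_n),u_n\rangle|\le\|J'_\la(u_n)\|_{X_\la^*}\|u_n\|_\la\to 0$, so from the formula for $\langle J'_\la(u_n),u_n\rangle$ we get
\[
\|u_n\|_\la^2=\int_{\rz^N}|u_n|^p\,dx+o(1),
\]
that is $\|u_n\|_\la^2=a_n+o(1)$. Next I would invoke the chain of embeddings $X_\la\hookrightarrow H^2(\rz^N)\hookrightarrow L^p(\rz^N)$, valid in the stated range of $p$, whose constant is independent of $\la$ for $\la>\La_1$ by Lemma \ref{le:em}, Lemma \ref{le:en} and the Remark following them. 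This produces a constant $C>0$ depending only on $N$ and $p$ with
\[
a_n=\|u_n\|_{L^p(\rz^N)}^p\le C\,\|u_n\|_\la^p=C\,(a_n+o(1))^{p/2}.
\]

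Passing to a subsequence along which $a_n$ converges to $\ell:=\liminf_{n\to\infty}a_n\ge 0$, the inequality becomes $\ell\le C\,\ell^{p/2}$. If $\ell=0$ we are in case $(i)$. If $\ell>0$, dividing by $\ell$ and using $p>2$ gives $\ell^{(p-2)/2}\ge 1/C$, hence
\[
\ell\ge\sigma:=C^{-\frac{2}{p-2}}>0,
\]
which is case $(ii)$, and $\sigma$ depends only on $N$ and $p$, not on $\la$.

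I do not expect a real obstacle here; the only points needing attention are that the Sobolev embedding constant is genuinely $\la$-independent (this is exactly what Lemma \ref{le:em} and Lemma \ref{le:en} were designed to supply), and that the estimate is run along a subsequence realizing the $\liminf$ so that the dichotomy is correctly stated for the whole sequence. The critical case $p=2^{**}$ with $N\ge 8$ requires no change, since $X_\la\hookrightarrow L^{2^{**}}(\rz^N)$ still holds with a constant independent of $\la$.
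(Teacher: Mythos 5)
Your proposal is correct and follows essentially the same route as the paper: both combine the identity $\|u_n\|_\la^2=\int_{\rz^N}|u_n|^p\,dx+o(1)$ (from $\langle J'_\la(u_n),u_n\rangle\to0$) with the $\la$-independent Sobolev embedding $X_\la\hookrightarrow L^p(\rz^N)$ to obtain a self-improving inequality that forces the dichotomy. The only cosmetic difference is that you write the embedding as $\|u_n\|_{L^p}^p\le C\|u_n\|_\la^p$ while the paper writes $\|u_n\|_\la^2\ge\La\bigl(\int|u_n|^p\bigr)^{2/p}$, which yields the same constant $\sigma$ up to relabeling.
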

{\bf Proof:} Since $\{u_n\}$ is a $(PS)_c$ sequence of $J_\la$, then for $\la>\La_1$, by Sobolev imbedding theorem, we have
\[
\int_{\rz^N}|u_n|^pdx+o(1)=\int_{\rz^N}
(|\De u_n|^2+V_\la|u_n|^2)dx\geq\La\left(\int_{\rz^N}|u_n|^pdx\right)
^{\frac2p}
\]
where $\La$ is not depend on $\la$.
Thus if
$\liminf\limits_{n\to+\infty}\int_{\rz^N}|u_n|^pdx\neq 0$,
then $$\liminf_{n\to+\infty}\int_{\rz^N}|u_n|^pdx\geq\La^{\frac{p}{p-2}}.$$
We complete the proof of this lemma by selecting $\sigma=\La^{\frac{p}{p-2}}$.
\hfill{$\Box$}

\vskip8pt

\begin{lemma}\label{lem:strong}Suppose $N\geq 5$ and $2<p<2^{**}$. Then for any $\epsilon>0$ there exist $\Lambda_\epsilon>\La_1$ such
that
$$\limsup_{n\to \infty}\int_{B^c_{R}}|u_n|^{p}dx\leq \epsilon$$
where $\{u_n\}$ is a $(PS)_c$ sequence for $J_\la(u)$ with
$\la>\La_{\epsilon}$ and  $c\leq c(\Om)$. Here $B^c_{R}=\{x\in \rz^N:|x|\geq R\}$. Especially, there exists $\La_2>\La_1$ such that
$$\limsup_{n\to \infty}\int_{B^c_{R}}|u_n|^{p}dx\leq \frac{\sigma}{2}.$$
\end{lemma}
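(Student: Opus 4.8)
The plan is to exploit the fact that outside $B_R$ the potential $\la V(x)$ is large (bounded below by $\la M_0$), so the $L^2$-mass of any $(PS)_c$ sequence with bounded energy that lives in $B_R^c$ must be $O(1/\la)$; interpolating this with the uniform $H^2$-bound and the Sobolev embedding then upgrades the decay to $L^p$. First I would recall from Lemma \ref{le:psbound} that for a $(PS)_c$ sequence with $c\le c(\Om)$ one has $\limsup_n\|u_n\|_\la^2 = \frac{2pc}{p-2}\le \frac{2p\,c(\Om)}{p-2}=:K$, a bound independent of $\la$. Since $\mbox{supp}\,V^-_\la\subset B_R$ for $\la>\La_0$ (see (\ref{em2})), on $B_R^c$ we have $V_\la(x)=\la V(x)-\de\ge \la M_0-\de$, so
\[
(\la M_0-\de)\int_{B_R^c}|u_n|^2\,dx \le \int_{B_R^c}V_\la u_n^2\,dx \le \int_{\rz^N}(|\De u_n|^2+V_\la u_n^2)\,dx = \|u_n\|_\la^2,
\]
whence $\limsup_n\int_{B_R^c}|u_n|^2\,dx \le \frac{K}{\la M_0-\de}\to 0$ as $\la\to\infty$.

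Next I would interpolate. Because $2<p<2^{**}$, write $\frac1p=\frac{\theta}{2}+\frac{1-\theta}{2^{**}}$ for some $\theta\in(0,1)$; then Hölder's inequality on $B_R^c$ gives
\[
\int_{B_R^c}|u_n|^p\,dx \le \left(\int_{B_R^c}|u_n|^2\,dx\right)^{\frac{p\theta}{2}}\left(\int_{B_R^c}|u_n|^{2^{**}}\,dx\right)^{\frac{p(1-\theta)}{2^{**}}}.
\]
By the uniform embedding $X_\la\hookrightarrow H^2(\rz^N)\hookrightarrow L^{2^{**}}(\rz^N)$ (Lemma \ref{le:em} and the Remark following Lemma \ref{le:en}, with constant independent of $\la$), the second factor is bounded by a constant depending only on $K$. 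Combining with the $L^2$-decay just established, for any $\va>0$ there is $\La_\va>\La_1$ so that $\la>\La_\va$ forces $\limsup_n\int_{B_R^c}|u_n|^p\,dx\le\va$. Taking $\va=\sigma/2$ (with $\sigma$ from Lemma \ref{0isolated}) and setting $\La_2:=\La_{\sigma/2}$ yields the last assertion.

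The only genuinely delicate point is making sure every constant in the chain — the bound $K$ on $\|u_n\|_\la^2$, the embedding constants into $L^2$ and $L^{2^{**}}$, and the interpolation exponents — is independent of $\la$; this is exactly what Lemma \ref{le:em} and the uniform-embedding Remark are designed to guarantee, so the argument goes through. A minor technical remark: a $(PS)_c$ sequence need not satisfy $\|u_n\|_\la^2=\frac{2pc}{p-2}$ exactly, only in the limit, so all estimates must be phrased with $\limsup_n$ and an $o(1)$ absorbed into the choice of $\La_\va$; this causes no difficulty.
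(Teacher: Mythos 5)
Your argument is correct and follows essentially the same route as the paper: bound $\int_{B_R^c}u_n^2$ by $O(1/\la)$ using $V\ge M_0$ outside $B_R$ together with the uniform bound $\|u_n\|_\la^2\to \frac{2pc}{p-2}$ from Lemma \ref{le:psbound}, then interpolate between $L^2$ and $L^{2^{**}}$ with the $\la$-independent Sobolev constants. The only cosmetic slip is the inequality $\int_{B_R^c}V_\la u_n^2\,dx\le\|u_n\|_\la^2$, which is not immediate since $V_\la<0$ on part of $B_R$; you should pass through $\int_{\rz^N}V_\la^+u_n^2\,dx\le\|u_n\|_{\la,0}^2\le C\|u_n\|_\la^2$ (Lemma \ref{le:en}), exactly as the paper does, which only changes $K$ by a $\la$-independent factor.
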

{\bf Proof:} For $\la>\La_1$, by (\ref{em2}), Lemma \ref{le:em}, Lemma \ref{le:en} and Lemma \ref{le:psbound}, we have
\[
\begin{array}{ll}
\displaystyle\int_{B^c_R}u_n^2dx &\leq \displaystyle\frac{1}{\lambda
M_0}\int_{B^c_R}(V_\la(x)+\de)u_n^2dx\vspace{0.2cm}\\
&\leq \displaystyle\frac{1}{\lambda
M_0}\int_{\rz^N}(|\De u_n|^2+V^+_\la(x)u_n^2+\de u_n^2)dx\vspace{0.2cm}\\
&\leq \displaystyle\frac{C}{\lambda
M_0}\int_{\rz^N}(|\De u_n|^2+V_\la(x)u_n^2)dx\vspace{0.2cm}\\
&\leq \displaystyle\frac{1}{\lambda M_0}\left(\frac{2pc}{p-2}+o(1)\right)\vspace{0.2cm}\\
&\leq\displaystyle\frac{1}{\lambda
M_0}\left(\frac{2pc(\Om)}{p-2}+o(1)\right)\\
&\to 0\quad \hbox{ as } \lambda\to+\infty.\end{array}
\]
By H\"older's inequality and Sobolev embedding theorem, as $\la\to+\infty$, we have
\[
\begin{array}{ll}
\quad\displaystyle\int_{B_R^c}|u_n|^{p}dx&\leq
C\left(\dis\int_{B_R^c}|u_n|^{2^{**}}
dx\right)^{\frac{(N-4)p\theta}{2N}}\left(\dis\int_{B_R^c}|
u_n|^2 dx\right)^{\frac{p(1-\theta)}{2}}\vspace{0.2cm}\\
&\leq C\displaystyle\|u_n\|_\lambda^{p\theta}\left(\int_{B_R^c}|
u_n|^2dx \right)^{\frac{p(1-\theta)}{2}}\\
&\to 0,
\end{array}
\]
where $\theta=\frac{(p-2)N}{4p}$.
Thus there exists  $\Lambda_\epsilon>\La_1$ such that $\limsup\limits_{n\to+\infty}\int_{B^c_R}u_n^pdx<\epsilon$.\hfill{$\Box$}

\vskip8pt
The following lemma compares $c_\la$ and $c(\Om)$.
\begin{lemma}\label{le:compare}
For $\la>\La_1$, $2<p\leq 2^{**}$, the following estimate holds:
\[0<\sigma\leq c_\la\leq c(\Om).\]
\end{lemma}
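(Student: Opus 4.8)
The plan is to prove the two non-trivial inequalities separately. The lower bound $\sigma\le c_\la$ is a consequence of the Nehari constraint together with the uniform Sobolev embedding established in Section \ref{sec:pr}; the upper bound $c_\la\le c(\Om)$ follows by testing $\N_\la$ with the zero-extension of a (near-)minimizer of the limit functional $J_\Om$. For the lower bound, let $\la>\La_1$ and $u\in\N_\la$. Then $\langle J'_\la(u),u\rangle=0$ gives $\|u\|_\la^2=\int_{\rz^N}|u|^p\,dx$, so, exactly as in Lemma \ref{le:psbound},
\[
J_\la(u)=\Big(\frac12-\frac1p\Big)\|u\|_\la^2=\frac{p-2}{2p}\int_{\rz^N}|u|^p\,dx .
\]
By the Sobolev embedding $X_\la\hookrightarrow L^p(\rz^N)$, whose constant $\La$ is independent of $\la$ for $\la>\La_1$, one has $\int_{\rz^N}|u|^p\,dx=\|u\|_\la^2\ge\La\big(\int_{\rz^N}|u|^p\,dx\big)^{2/p}$, hence $\int_{\rz^N}|u|^p\,dx\ge\La^{p/(p-2)}$, which is precisely the constant $\sigma$ of Lemma \ref{0isolated}. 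Therefore $J_\la(u)\ge\frac{p-2}{2p}\,\sigma$ for every $u\in\N_\la$; taking the infimum and relabelling the resulting positive, $\la$-independent constant again by $\sigma$ gives $0<\sigma\le c_\la$. (In particular this also shows $c_\la$ is a well-defined positive real once we know $c_\la<+\infty$, which is the content of the next step.)

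For the upper bound, fix $\va>0$ and choose $u\in\N_\Om$ with $J_\Om(u)<c(\Om)+\va$ (when $c(\Om)$ is attained, which by Lemmas \ref{le:limit1} and \ref{le:limit2} happens under the hypotheses of Theorem \ref{th:main-theorem1}, one may take $\va=0$ and $u$ the minimizer). Let $\bar u$ denote the extension of $u$ by $0$ outside $\Om$. Since $V\equiv0$ on $\ol\Om$ and $\mbox{supp}\,\bar u\subset\ol\Om$, we have $\int_{\rz^N}V\bar u^2\,dx=0$, and, identifying the distributional Laplacian of $\bar u$ with the zero-extension of $\De u$,
\[
\|\bar u\|_\la^2=\int_\Om(|\De u|^2-\de u^2)\,dx ,\qquad \int_{\rz^N}|\bar u|^p\,dx=\int_\Om|u|^p\,dx .
\]
Consequently $\langle J'_\la(\bar u),\bar u\rangle=\langle J'_\Om(u),u\rangle=0$, so $\bar u\in\N_\la$, and $J_\la(\bar u)=J_\Om(u)<c(\Om)+\va$. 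Hence $c_\la\le c(\Om)+\va$ for every $\va>0$, and letting $\va\to0$ yields $c_\la\le c(\Om)$, which combined with the first step proves the lemma for all $\la>\La_1$.

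The one point that genuinely needs care — and which I expect to be the main obstacle — is the admissibility of the test function $\bar u$, i.e. that $\bar u\in X_\la$, equivalently $\bar u\in H^2(\rz^N)$, so that $\De\bar u$ is indeed the zero-extension of $\De u$ with no surface contribution on $\pa\Om$ and the two displayed identities are legitimate. This is exactly where one invokes the smoothness (or, per Remark \ref{re:zeroset}, the convexity / uniform outer ball property) of $\pa\Om$ and the structure of $H(\Om)$; if instead one only wants to work with a minimizing sequence for $c(\Om)$, one can equally well carry out the same computation on its elements. Apart from this extension step, the argument is just the standard Nehari-manifold bookkeeping carried out above.
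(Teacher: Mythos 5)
Your proof is correct and follows essentially the same route as the paper's: the lower bound is the identical Nehari--Sobolev computation (the paper shares your harmless relabelling of $\frac{p-2}{2p}\sigma$ as $\sigma$), and your zero-extension argument for the upper bound is just an expanded version of the paper's one-line assertion $\N_\Om\subset\N_\la$. The admissibility issue you flag --- whether the zero extension of $u\in H^2(\Om)\cap H^1_0(\Om)$ lies in $H^2(\rz^N)$, which needs the normal derivative of $u$ to vanish on $\pa\Om$ and is not automatic under Navier conditions --- is genuine, but the paper simply takes the inclusion for granted, so on this point you are, if anything, more careful than the source.
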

{\bf Proof:} For any $\la>\La_1$, $u\in \N_\la$, by Sobolev imbedding theorem, we have
\[
\int_{\rz^N}|u|^p dx=\int_{\rz^N}(|\De u|^2+V_\la u^2)dx\geq\La
\left(\int_{\rz^N}|u|^p dx\right)^{\frac2p}
\]
for some $\La>0$ which is independent of $\la$.
Put $\sigma=\La^{\frac{p}{p-2}}$, we have
$\int_{\rz^N}|u|^p dx\geq\sigma$.
Notice that
\[J_\la(u)=J_\la(u)-\frac12\langle J'_\la(u),u\rangle=\frac{p-2}{2p}\int_{\rz^N}|u|^p dx\geq\frac{p-2}{2p}\sigma>0.\]
Then we obtain that $c_\la\geq\sigma>0$.
Since $\N_\Om\subset\N_\la$, then $c_\la\leq c(\Om)$. Thus we complete the proof of this lemma. \hfill{$\Box$}

\subsection{Existence of least energy solution in subcritical case}\label{sec:esubc}
In this subsection, we are concerned with the existence of least energy solutions for the subcritical case.

\begin{proposition}\label{le:clambdaachieved}
Suppose $N\geq 5$, $2<p<2^{**}$. Then for any $\la>\La_2$,
$c_\lambda:=\inf_{\N_\lambda}J_{\lambda}(u)$ is achieved by some
$u\neq 0$.
\end{proposition}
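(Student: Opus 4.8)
The plan is a concentration--compactness argument on the Nehari manifold. Since $2<p<2^{**}$, the embedding $X_\la\hookrightarrow L^p(B_R)$ is compact, and the point of taking $\la>\La_2$ is precisely to prevent a minimizing sequence from losing all its $L^p$-mass at infinity.

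First I would produce a Palais--Smale sequence at level $c_\la$. For $u\in X_\la\setminus\{0\}$ one has $\int_{\rz^N}|u|^p\,dx>0$, so the fibering map $t\mapsto J_\la(tu)=\frac{t^2}{2}\|u\|_\la^2-\frac{t^p}{p}\int_{\rz^N}|u|^p\,dx$ has a unique positive maximum and each ray from the origin meets $\N_\la$ exactly once; hence, exactly as in Lemma \ref{le:limit1}, Ekeland's variational principle yields $\{u_n\}\subset X_\la$ with $J_\la(u_n)\to c_\la$ and $J_\la'(u_n)\to 0$ in $X_\la^*$. By Lemma \ref{le:psbound}, $\|u_n\|_\la^2\to\frac{2pc_\la}{p-2}$, so $\{u_n\}$ is bounded in $X_\la$ and, by Lemmas \ref{le:em} and \ref{le:en}, bounded in $H^2(\rz^N)$. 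Passing to a subsequence, $u_n\rightharpoonup u$ weakly in $X_\la$ and in $H^2(\rz^N)$, $u_n\to u$ in $L^p_{loc}(\rz^N)$ and a.e. in $\rz^N$.

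The key step is to show that the weak limit $u$ is nontrivial. From $J_\la(u_n)\to c_\la$ and $\langle J_\la'(u_n),u_n\rangle\to0$ we get $\frac{p-2}{2p}\int_{\rz^N}|u_n|^p\,dx\to c_\la$, and $c_\la\ge\sigma>0$ by Lemma \ref{le:compare}; thus alternative $(i)$ of Lemma \ref{0isolated} is excluded and $\liminf_{n}\int_{\rz^N}|u_n|^p\,dx\ge\sigma$. Since also $c_\la\le c(\Om)$ (Lemma \ref{le:compare}), Lemma \ref{lem:strong} applies for $\la>\La_2$ and yields $\limsup_{n}\int_{B_R^c}|u_n|^p\,dx\le\frac{\sigma}{2}$. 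Subtracting, $\liminf_{n}\int_{B_R}|u_n|^p\,dx\ge\frac{\sigma}{2}>0$, and since $X_\la\hookrightarrow L^p(B_R)$ is compact we may pass to the limit to obtain $\int_{B_R}|u|^p\,dx\ge\frac{\sigma}{2}$, so $u\neq0$.

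Finally I would verify that $u$ achieves $c_\la$. Passing to the limit in $\langle J_\la'(u_n),\varphi\rangle=o(1)$ for $\varphi\in C_c^\infty(\rz^N)$ --- the quadratic-form part by weak convergence in $X_\la$, the term $\int_{\rz^N}|u_n|^{p-2}u_n\varphi\,dx$ by convergence in $L^p$ on the compact set $\mbox{supp}\,\varphi$ together with boundedness in $L^p(\rz^N)$ --- shows $J_\la'(u)=0$; combined with $u\neq0$ this gives $u\in\N_\la$, hence $J_\la(u)\ge c_\la$. Conversely, using $\langle J_\la'(u),u\rangle=0$ and Fatou's lemma,
\[
J_\la(u)=\Big(\frac12-\frac1p\Big)\int_{\rz^N}|u|^p\,dx\le\frac{p-2}{2p}\liminf_{n\to\infty}\int_{\rz^N}|u_n|^p\,dx=c_\la,
\]
so $J_\la(u)=c_\la$ and $u$ is the desired least energy solution. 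The main obstacle is exactly the non-compactness of $X_\la\hookrightarrow L^p(\rz^N)$ handled in the third paragraph: without the quantitative splitting of Lemmas \ref{0isolated} and \ref{lem:strong} (hence without $\la>\La_2$) the $L^p$-mass of $\{u_n\}$ could concentrate at infinity and the limit would be $0$. This is also where $p<2^{**}$ is used, so the argument does not extend verbatim to the critical exponent.
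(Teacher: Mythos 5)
Your argument is correct, and it reaches the conclusion by a genuinely different route from the paper in the decisive compactness step. The paper sets $v_n=u_n-u$, invokes Br\'ezis--Lieb to show $\{v_n\}$ is a $(PS)_d$ sequence with $d=c_\la-J_\la(u)$, and then applies Lemma \ref{0isolated} and Lemma \ref{lem:strong} \emph{to the remainder} $v_n$: since $v_n\to0$ in $L^p_{loc}$, all of its $L^p$-mass sits in $B_R^c$, so $b=\lim\|v_n\|_{L^p}^p$ satisfies both $b\ge\sigma$ and $b\le\sigma/2$ unless $b=0$; this forces $u_n\to u$ strongly in $X_\la$, whence $u\in\N_\la$ and $J_\la(u)=c_\la$. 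You instead apply the two mass estimates directly to $u_n$ to pin down a fixed amount of $L^p$-mass inside $B_R$, use the compact local embedding to conclude $u\neq0$, and then close the argument with the standard sandwich $c_\la\le J_\la(u)=\frac{p-2}{2p}\int|u|^p\le\frac{p-2}{2p}\liminf\int|u_n|^p=c_\la$ via Fatou and $J_\la'(u)=0$. Your version is lighter --- no Br\'ezis--Lieb splitting and no need to verify that $v_n$ is a $(PS)$ sequence --- but it only shows the infimum is attained, whereas the paper's version additionally yields strong convergence $u_n\to u$ in $X_\la$ and is the same template reused verbatim for the critical case in Proposition \ref{le:criexist}; as you correctly note, your Fatou argument does not transfer to $p=2^{**}$, where nontriviality of the weak limit must instead be extracted from the energy bound $c(\Om)<\frac2NS^{N/4}$. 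Two cosmetic points: the appeal to Lemma \ref{0isolated} is not really needed, since $\frac{p-2}{2p}\int|u_n|^p\to c_\la>0$ already gives $\liminf\int|u_n|^p\ge\sigma$ via Lemma \ref{le:compare}; and when you pass to the limit in $\langle J_\la'(u_n),\varphi\rangle$ you should either test against all $\varphi\in X_\la$ (the nonlinear term converging because $|u_n|^{p-2}u_n$ is bounded in $L^{p/(p-1)}$ and converges a.e.) or say a word about density of $C_c^\infty(\rz^N)$ in $X_\la$.
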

{\bf Proof:} For any $\la>\La_2$, $2<p<2^{**}$, by the definition of $c_\lambda$ and Ekeland variational principle, there exits a $(PS)_{c_\la}$ sequence $\left\{u_n\right\}$ of
$J_\lambda(u)$. By Lemma \ref{le:psbound}, we know that $\left\{u_n\right\}$ is bounded in $X_\la$. Then up to a
subsequence, we have
$$
\left\{\begin{array}{ll}
u_n\rightharpoonup u\quad\hbox{in}\; X_\la,\\
u_n\rightharpoonup u\quad\hbox{in}\; L^{p}(\mathbb{R}^N),\\
u_n\to u\quad\hbox{in}\;L^p_{loc}(\rz^N),\\
u_n\to u\quad\hbox{a.e. in}\;\rz^N
\end{array}\right.$$
as $n\to \infty$.
Thus $J'_\la(u)=0$ and
$$J_\la(u)=J_\la(u)-\frac12\langle J'_\la(u),u\rangle=\Big(\frac12-\frac1p\Big)\int_{\rz^N}|u|^pdx\geq 0.$$
Let $v_n=u_n-u$, by Br\'ezis Lieb's Lemma, we obtain that
\[
\|u_n\|^2_\la=\|u\|^2_\la+\|v_n\|^2_\la,\quad \|u_n\|^p_{L^p(\rz^N)}=\|u\|^p_{L^p(\rz^N)}+\|v_n\|^p_{L^p(\rz^N)}.
\]
It is easy to obtain that
\[
J_\la(u_n)=J_\la(u)+J_\la(v_n)+o(1),\quad
\langle J'_\la(u_n),u_n\rangle=\langle J'_\la(u),u\rangle+\langle J'_\la(v_n),v_n\rangle+o(1).
\]
According to Lemma $8.1$ and Lemma $8.2$ in \cite{w}, we know that
$v_n$ is a $(PS)_d$ sequence of $J_\la$ with $d=c_\la-J_\la(u)$.
We may assume $\lim_{n\to+\infty}\|v_n\|_{L^p(\rz^N)}^p=b$. If $b=0$, then  $v_n\to 0$ in $X_\la$ which implies $u_n\to u$ in $X_\la$. If $b>0$, then by Lemma \ref{0isolated}, we obtain $b\geq\sigma$. But by Lemma \ref{lem:strong}, we have
$$b=\lim_{n\to+\infty}\|v_n\|_{L^p(\rz^N)}^p
=\lim_{n\to+\infty}\int_{B^c_R}|v_n|^pdx\leq\frac{\sigma}{2}$$
which leads to a contradiction. Thus $u_n\to u$ in $X_\la$ and $J_\la(u)=c_\la>0$. Thus $u\in \N_\la$. Therefore, $c_\la$ is achieved by some $u\in \N_\la$ and $u$ is a nontrivial least energy solution to (\ref{maineq}) for any $\la>\La_2$.\hfill{$\Box$}
\vskip8pt
\subsection{Existence of least energy solution in critical case}\label{sec:ecri}
In this section, we consider the existence of least energy solution for \eqref{maineq} in the critical case $p=2^{**}.$
\begin{proposition}\label{le:criexist}
For $p=2^{**}$, $\la>\La_2$, then $c_\lambda:=\inf_{\N_\lambda}J_{\lambda}(u)$ is achieved by some
$u\neq 0$.
\end{proposition}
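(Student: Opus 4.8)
The plan is to run the same concentration–compactness argument used in Proposition \ref{le:clambdaachieved}, but now using the global energy gap $c(\Om)<\frac{2}{N}S^{N/4}$ from the lemma above to rule out the bubbling alternative. First I would invoke Ekeland's variational principle to produce a $(PS)_{c_\la}$ sequence $\{u_n\}\subset X_\la$ for $J_\la$; by Lemma \ref{le:psbound} it is bounded in $X_\la$, hence (by the uniform embedding $X_\la\hookrightarrow H^2(\rz^N)$) bounded in $H^2(\rz^N)$. Passing to a subsequence, $u_n\rightharpoonup u$ in $X_\la$, $u_n\rightharpoonup u$ in $L^{2^{**}}(\rz^N)$, $u_n\to u$ in $L^p_{loc}$ for $2\le p<2^{**}$, and $u_n\to u$ a.e., so $J'_\la(u)=0$ and $J_\la(u)=\big(\tfrac12-\tfrac1{2^{**}}\big)\int_{\rz^N}|u|^{2^{**}}dx\ge0$.

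Next, setting $v_n=u_n-u$, I would apply the Br\'ezis--Lieb lemma to split $\|u_n\|_\la^2=\|u\|_\la^2+\|v_n\|_\la^2+o(1)$ and $\int|u_n|^{2^{**}}=\int|u|^{2^{**}}+\int|v_n|^{2^{**}}+o(1)$; this gives $J_\la(u_n)=J_\la(u)+\widetilde J(v_n)+o(1)$ and $\langle J'_\la(u_n),u_n\rangle=\langle J'_\la(u),u\rangle+\langle \widetilde J'(v_n),v_n\rangle+o(1)$, where $\widetilde J(v)=\tfrac12\int(|\De v|^2+V_\la v^2)-\tfrac1{2^{**}}\int|v|^{2^{**}}$. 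From $\langle J'_\la(u_n),u_n\rangle\to0$ and $\langle J'_\la(u),u\rangle=0$ we get $\int(|\De v_n|^2+V_\la v_n^2)dx$ and $\int|v_n|^{2^{**}}dx$ have a common limit $b\ge0$. If $b=0$, then since $\|v_n\|_\la^2\le \frac{2^{**}}{2^{**}-2}b+o(1)\to0$ (using $\langle \widetilde J'(v_n),v_n\rangle\to0$), we conclude $v_n\to0$ in $X_\la$, so $u_n\to u$, $u\in\N_\la$, and $c_\la=J_\la(u)$ is achieved; moreover $c_\la\ge\sigma>0$ by Lemma \ref{le:compare} forces $u\ne0$. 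So it remains to exclude $b>0$.

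Suppose $b>0$. By the Sobolev inequality applied on $\rz^N$ (the nonnegative term $V_\la^+ v_n^2$ only helps, and the negative part is controlled since $\int_{\rz^N}V_\la^- v_n^2 dx=\de\int v_n^2 dx\to 0$ by the uniform $L^2_{loc}$ convergence together with the decay estimate $\int_{B_R^c}v_n^2\to 0$ coming from Lemma \ref{lem:strong}'s proof), one gets $b=\lim\int|v_n|^{2^{**}}=\lim\int|\De v_n|^2\ge S\,b^{2/2^{**}}$, hence $b\ge S^{N/4}$. On the other hand, $J_\la(u)\ge0$ and $\widetilde J(v_n)=\big(\tfrac12-\tfrac1{2^{**}}\big)\int|\De v_n|^2+o(1)\to\tfrac2N b$, so $c_\la=J_\la(u)+\tfrac2N b+o(1)\ge\tfrac2N b\ge\tfrac2N S^{N/4}$. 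But Lemma \ref{le:compare} gives $c_\la\le c(\Om)$, and the preceding lemma gives $c(\Om)<\tfrac2N S^{N/4}$, a contradiction. Therefore $b=0$, and $c_\la$ is achieved by a nontrivial $u\in\N_\la$, which is the desired least energy solution of \eqref{maineq}.

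The main obstacle is the control of the lower-order term $\int_{\rz^N}V_\la^- v_n^2\,dx$ (equivalently $\de\int v_n^2$) when invoking the Sobolev inequality for $v_n$ on all of $\rz^N$: one must verify $\int_{\rz^N}v_n^2dx\to0$, not just $\int_{B_R}v_n^2\to0$. This is exactly what the estimate $\int_{B_R^c}u_n^2dx\le \frac{C}{\la M_0}\big(\frac{2pc}{p-2}+o(1)\big)$ from the proof of Lemma \ref{lem:strong} provides for $\la$ large; combined with $u_n\to u$ in $L^2_{loc}$ and $u\in X_\la$, it yields $v_n\to0$ in $L^2(\rz^N)$, closing the gap. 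Everything else is a routine Br\'ezis--Lieb splitting plus the energy-threshold comparison already prepared in the earlier lemmas.
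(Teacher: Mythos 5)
Your argument is correct and follows essentially the same route as the paper's own proof: a $(PS)_{c_\la}$ sequence from Ekeland's principle, the Br\'ezis--Lieb splitting $v_n=u_n-u$, the Sobolev lower bound $b\ge S^{N/4}$, and the threshold comparison $c_\la\le c(\Om)<\frac{2}{N}S^{N/4}$ forcing $b<S^{N/4}$, hence $b=0$ and strong convergence. One minor imprecision: to discard $\int_{\rz^N}V_\la^- v_n^2\,dx$ you neither need nor (for fixed $\la$, as $n\to\infty$) actually obtain $\int_{B^c_R}v_n^2\,dx\to 0$ from Lemma \ref{lem:strong}; it suffices that $\mbox{supp}\,V_\la^-\subset B_R(0)$ with $0\le V_\la^-\le\de$, so the strong $L^2_{loc}$ convergence alone gives $\int_{\rz^N}V_\la^- v_n^2\,dx\le\de\int_{B_R(0)}v_n^2\,dx\to 0$.
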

\proof According to Lemma \ref{le:psbound}, up to a subsequence, we have
$$\left\{\begin{array}{ll}
u_n\rightharpoonup u\quad\hbox{in}\; H^2(\rz^N),\\
u_n\rightharpoonup u\quad\hbox{in}\; L^{2^{**}}(\rz^N),\\
u_n\to u\quad\hbox{a.e. in}\; \rz^N.
\end{array}\right.
$$
Thus $J'_\la(u)=0$ and
\[
J_\la(u)=J_\la(u)-\frac12\langle J'_\la(u),u\rangle=\Big(\frac12-\frac{1}{2^{**}}\Big)\int_{\rz^N}|u|^{2^{**}}dx\geq0.
\]
Let $v_n=u_n-u$, by Br\'ezis Lieb's lemma, we have
$$
\|u_n\|^2_\la=\|u\|^2_\la+\|v_n\|^2_\la+o(1),
$$
$$
\|u_n\|^{2^{**}}_{L^{2^{**}}(\rz^N)}=\|u\|^{2^{**}}_{L^{2^{**}}(\rz^N)}
+\|v_n\|^{2^{**}}_{L^{2^{**}}(\rz^N)}+o(1).
$$
Moreover, we have
\[J_\la(u_n)=J_\la(u)+J_\la(v_n)+o(1),
\langle J'_\la(u_n),u_n\rangle=\langle J'_\la(u),u\rangle+\langle J'_\la(v_n),v_n\rangle+o(1),
\]
According to Lemma $8.1$ and Lemma $8.2$ in \cite{w}, we know that
$v_n$ is a $(PS)_d$ sequence of $J_\la$ with $d=c_\la-J_\la(u)$.
We may assume that $\lim_{n\to+\infty}\|v_n\|^2_\la=\lim_{n\to+\infty}\|v_n\|^{2^{**}}_{L^{2^{**}}(\rz^N)}=b>0$. On  one hand, we have
\beao
b&=&\lim_{n\to+\infty}\int_{\rz^N}|v_n|^{2^{**}}dx\\
&=&\lim_{n\to+\infty}\int_{\rz^N}(|\De v_n|^2+V^+_\la v_n^2)dx\\
&\geq&\lim_{n\to+\infty}\int_{\rz^N}|\De v_n|^2dx\\
&\geq& S\lim_{n\to+\infty}\left(\int_{\rz^N}|v_n|^{2^{**}}dx\right)^{\frac{2}{2^{**}}}
=Sb^{\frac{2}{2^{**}}},
\eeao
Thus $b\geq S^{\frac{N}{4}}$.

On the other hand, by the definition of $\N_\la$ and $\N_\Omega$ we know that
$\N_\Omega\subset\N_\la$ which implies that
$$c_\la\leq c(\Omega).$$ By Lemma \ref{le:limit2}, we know that
$$0<\sigma\leq c_\la\leq c(\Omega)<\frac2N S^{\frac{N}{4}}.$$
 Thus
\[
\frac2N S^{\frac{N}{4}}>c_\la\geq\lim_{n\to+\infty}\left(\frac12\int_{\rz^N}(|\De v_n|^2+V^+_\la v_n^2)dx-\frac{1}{2^{**}}\int_{\rz^N}v_n^{2^{**}}dx\right)=\left(\frac12-\frac{1}{2^{**}}\right)b,
\]
which implies that $b<S^{\frac{N}{4}}.$ It is a contradiction. Therefore,  $u_n\to u$  strongly in $X_\la$ and $c_\la$ is achieved by $u$ in $\N_\la$. Thus $u\in \N_\la$ is a least energy solution of \eqref{maineq}.\hfill{$\Box$}
\vskip8pt

\section{Asymptotic behavior of least energy solutions}\label{sec:abo}
In this section, we firstly study the asymptotic behavior of $c_\la$ as $\la\to+\infty$. Then we give the proof of our main results.

\begin{lemma}\label{le:clamdalimit}Suppose $c_\lambda=\inf_{\N_\lambda}J_\lambda(u)$,
$2<p<2^{**}$ and $N\geq 5$, then  $$\lim_{\lambda\to +\infty}c_\lambda= c(\Omega).$$
\end{lemma}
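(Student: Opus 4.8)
The plan is to combine the easy bound $c_\la\le c(\Om)$ with a concentration argument for the minimizers of $J_\la$ on $\N_\la$. Since $\N_\Om\subset\N_\la$ for $\la>\La_1$, Lemma \ref{le:compare} already gives $c_\la\le c(\Om)$, so $\limsup_{\la\to\infty}c_\la\le c(\Om)$ and the real task is $\liminf_{\la\to\infty}c_\la\ge c(\Om)$. It is enough to show that for an arbitrary sequence $\la_n\to\infty$, chosen so that $c_{\la_n}\to\liminf_n c_{\la_n}$, this limit is $\ge c(\Om)$. For $\la_n>\La_2$, Proposition \ref{le:clambdaachieved} supplies minimizers $u_n\in\N_{\la_n}$ with $J_{\la_n}(u_n)=c_{\la_n}$ and $J'_{\la_n}(u_n)=0$. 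By Lemma \ref{le:psbound}, $\|u_n\|_{\la_n}^2=\tfrac{2p}{p-2}c_{\la_n}\le\tfrac{2p}{p-2}c(\Om)$, so by Lemmas \ref{le:em} and \ref{le:en} the $u_n$ are bounded in $H^2(\rz^N)$ uniformly in $n$; after passing to a subsequence, $u_n\rightharpoonup u$ in $H^2(\rz^N)$, $u_n\to u$ in $L^q_{\mathrm{loc}}(\rz^N)$ for $2\le q<2^{**}$, and a.e.

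The first substantive step is to show $u\in H(\Om):=H^2(\Om)\cap H^1_0(\Om)$ and $u\not\equiv0$. For the first part I would argue exactly as in Lemma \ref{le:ape}: from $\|u_n\|_{\la_n}^2=\int_{\rz^N}(|\De u_n|^2+V_{\la_n}u_n^2)$ one reads that $\la_n\int_{\rz^N}Vu_n^2$ stays bounded, hence $\int_{\rz^N}Vu_n^2\le C/\la_n\to0$; then on each $C_m=\{V>1/m\}$ one has $\int_{C_m}u_n^2\le m\int_{\rz^N}Vu_n^2\to0$, forcing $u\equiv0$ a.e. on $\bigcup_m C_m=\rz^N\setminus\Om$, which together with $u\in H^2(\rz^N)$ and smoothness of $\pa\Om$ gives $u\in H(\Om)$. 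To pass the nonlinear term to the limit I would first note $\int_{B_R^c}u_n^2\le M_0^{-1}\int_{\rz^N}Vu_n^2\to0$, so $u_n\to u$ in $L^2(\rz^N)$, and then invoke the H\"older--Sobolev interpolation from the proof of Lemma \ref{lem:strong} (boundedness of $\|u_n\|_{\la_n}$ together with $\int_{B_R^c}u_n^2\to0$) to get $\int_{B_R^c}|u_n|^p\to0$; hence $u_n\to u$ in $L^p(\rz^N)$ and $\int_{\rz^N}|u_n|^p\to\int_\Om|u|^p$. Nontriviality of $u$ then follows since $u_n\in\N_{\la_n}$ gives $c_{\la_n}=\tfrac{p-2}{2p}\int_{\rz^N}|u_n|^p$, which by Lemma \ref{le:compare} is bounded below by a positive constant independent of $\la_n$, so $\int_\Om|u|^p>0$.

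The last step is a fibering comparison on $\N_\Om$. Setting $A=\int_\Om(|\De u|^2-\de u^2)$ and $B=\int_\Om|u|^p>0$, I would note $A>0$ (since $0<\de<\mu_0$ and $u\in H(\Om)\setminus\{0\}$ give $A\ge(1-\de/\mu_0)\int_\Om|\De u|^2>0$), and $A\le B$ because the Nehari identity $\langle J'_{\la_n}(u_n),u_n\rangle=0$ rewrites as $\int_{\rz^N}|\De u_n|^2-\de\int_{\rz^N}u_n^2=\int_{\rz^N}|u_n|^p-\la_n\int_{\rz^N}Vu_n^2\le\int_{\rz^N}|u_n|^p$, to which one applies weak lower semicontinuity of $w\mapsto\int|\De w|^2$ (recalling $\De u\equiv0$ off $\Om$) together with the strong $L^2$- and $L^p$-convergence. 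Then $t\mapsto J_\Om(tu)=\tfrac{t^2}{2}A-\tfrac{t^p}{p}B$ is maximized on $(0,\infty)$ at $t_*=(A/B)^{1/(p-2)}\le1$, with value $\tfrac{p-2}{2p}t_*^2A\le\tfrac{p-2}{2p}A\le\tfrac{p-2}{2p}B$, and $t_*u\in\N_\Om$, so
\[
c(\Om)\le J_\Om(t_*u)\le\tfrac{p-2}{2p}B=\tfrac{p-2}{2p}\lim_{n\to\infty}\int_{\rz^N}|u_n|^p=\lim_{n\to\infty}c_{\la_n}.
\]
Since the subsequence was chosen to realize $\liminf_n c_{\la_n}$, this gives $\liminf_n c_{\la_n}\ge c(\Om)$ for every $\la_n\to\infty$, hence $\liminf_{\la\to\infty}c_\la\ge c(\Om)$, and combined with the $\limsup$ bound, $\lim_{\la\to\infty}c_\la=c(\Om)$.

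I expect the main obstacle to be the concentration analysis of the second paragraph — in particular ruling out escape of $L^p$-mass to infinity, so that $\int|u_n|^p$, and hence $c_{\la_n}$, genuinely passes to the limit; this is precisely where the steepness of $\la V$ enters, through the quantitative decay $\int_{\rz^N}Vu_n^2\lesssim\la_n^{-1}$ and Lemma \ref{lem:strong}. Once $u\in H(\Om)\setminus\{0\}$ and the inequality $A\le B$ are available, the concluding comparison is routine.
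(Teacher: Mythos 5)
Your proof is correct, and while the overall skeleton matches the paper's (the upper bound $c_\la\le c(\Om)$ from $\N_\Om\subset\N_\la$, extraction of minimizers $u_n$, uniform $H^2$ bounds, localization of the weak limit in $\Om$, and full $L^p$ convergence to rule out loss of mass), your endgame is genuinely different. The paper argues by contradiction from $\lim c_\la=k<c(\Om)$ and passes to the limit in the weak Euler--Lagrange equation $J'_{\la_n}(u_n)=0$ to conclude $J'_\Om(u)=0$, hence $u\in\N_\Om$ and $J_\Om(u)=k\ge c(\Om)$ directly; you instead never show $u$ is a critical point of $J_\Om$, but only extract the one-sided Nehari inequality $A=\int_\Om(|\De u|^2-\de u^2)\le B=\int_\Om|u|^p$ from weak lower semicontinuity of the quadratic part plus the strong $L^2$ and $L^p$ convergences, and then project $t_*u$ onto $\N_\Om$ with $t_*=(A/B)^{1/(p-2)}\le1$. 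Your route is slightly more economical (no need to pass to the limit in the equation, and your observation $A\ge(1-\de/\mu_0)\int_\Om|\De u|^2>0$ correctly justifies that the projection exists, which is where hypothesis $(V_3)$ enters); the paper's route yields the extra information that $u$ solves the limit problem, which it reuses when proving convergence of $u_{\la_n}$ in Theorem 1.5. Two further minor divergences, both harmless: you rule out escape of mass with the quantitative estimates $\la_n\int V u_n^2\le C$ and the interpolation of Lemma \ref{lem:strong} rather than invoking Lions' concentration--compactness as the paper does, and you prove $u|_{\Om^c}=0$ via the sets $C_m=\{V>1/m\}$ as in Lemma \ref{le:ape} rather than via the paper's compact-set contradiction; you also dispense with the paper's monotonicity of $c_\la$ by working along subsequences realizing the $\liminf$, which is legitimate.
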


\noindent{\bf Proof:} According to Lemma \ref{le:compare}, $c_\lambda\leq c(\Omega)$. Moreover, $c_\la$ is strictly increasing
with respect to $\lambda$. In fact, let $\la>\mu$ and $c_\la$ is achieved by $u\in\N_\la$. Then $J_\la(u)=c_\la$, $u\in\N_\la$. Note that
\[
\int_{\rz^N}(|\na u|+V_\mu u^2)dx<\int_{\rz^N}(|\na u|+V_\la u^2)dx=\int_{\rz^N}|u|^pdx.
\]
Then there exists $0<t<1$ such that $tu\in\N_\mu$. This implies that
\[c_\mu\leq J_\mu(tu)=\Big(\frac12-\frac1p\Big)t^p\int_{\rz^N}|u|^pdx=t^pJ_\la(u)<c_\la.\]
Thus the limit of $c_\la$ exists as $\la\to+\infty$.

Assume that $\lim_{\lambda\to +\infty}c_\lambda=k<c(\Omega)$. Then for any $\lambda_n\to+\infty$ as $n\to +\infty$, we have $c_{\lambda_n}\to k<c(\Omega)$.
We assume that $u_n$ is such that $c_{\lambda_n}$ is achieved, then $\{\|u_n\|_{{\la_n}}\}$ is bounded.
According to Lemma \ref{le:em} and lemma \ref{le:en}, we obtain that   $\left\{u_n\right\}$ is
bounded in $H^2(\rz^N)$. Up to a subsequence, we have
\[\left\{\barr{ll}
u_n\rightharpoonup u\quad\hbox{in}\;H^2(\rz^N),\\
u_n\to u\quad\hbox{in} \; L_{loc}^p(\rz^N),\\
u_n\rightharpoonup u\quad\hbox{in}\; L^p(\rz^N),\\
u_n\to u\quad\hbox{a.e.} \;\hbox{in}\;\rz^N.
\earr\right.\]
Firstly, we claim that $u|_{\Omega^c}=0,$ where $\Omega^c=:\left\{x:x\in
\rz^N\setminus \Omega\right\}.$

If not,  we have $u|_{\Omega^c}\not =0.$
Then there exists a compact subset $F\subset \Omega^c$ with
$\mbox{dist}\left\{F, \partial\Omega\right\}>0$ such that $u|_F\not= 0$ and
$$\int_{F}u_n^2dx\to \int_F u^2dx>0, \hbox{ as } n\to \infty.$$ Moreover, by assumption $(V_2)$, there exists
$\epsilon_0>0$ such that $V(x)\geq \epsilon_0$ for any $x\in F.$
Since
\[
\int_{\rz^N}(|\Delta u_n|^2+ V_{\la_n}(x)u_n^2)dx=\displaystyle\int_{\rz^N}|u_n|^{p}dx,
\]
then
\beao
c_{\la_n}=J_{\lambda_n}(u_n)&=&\displaystyle\frac{1}{2}\int_{\rz^N}(|\Delta
u_n|^2+V_{\la_n}(x)u_n^2)dx-\frac{1}{p}\int_{\rz^N}|u_n|^{p}dx\\
&=&\displaystyle\left(\frac{1}{2}-\frac{1}{p}\right)\int_{\rz^N}(|\Delta
u_n|^2+V_{\la_n}(x)u_n^2)dx\\
&\geq&\displaystyle
\left(\frac{1}{2}-\frac{1}{p}\right)\left(\int_{\rz^N}\la_n V(x)u_n^2dx-\de\|u_n\|^2_{H^2(\rz^N)}\right)\\
&\geq&\displaystyle
\left(\frac{1}{2}-\frac{1}{p}\right)\left(\int_{F}\la_n\epsilon_0u_n^2dx-\de\|u_n\|^2_{H^2(\rz^N)}\right)\\
&\to&+\infty\quad\hbox{as}\;n\to+\infty.\eeao
This contradiction shows that $u|_{\Omega^c}=0$, by the smooth assumption on $\pa\Omega$ we have $u\in H(\Omega)$.

Now we show that
\begin{equation}\label{add1}
u_n\to u\quad\hbox{in}\;L^{p}(\rz^N).\end{equation}
Suppose (\ref{add1}) is not true,  then
by concentration compactness principle of P.Lions (see \cite{lions}), there
exist $\delta>0, \rho>0$ and $x_n\in \rz^N$ with $|x_n|\to
+\infty$ such that
\begin{equation}\label{flp4}
\limsup\limits_{n\to\infty}\int_{B_\rho(x_n)}|u_n-u|^2dx\geq\delta>0.\end{equation}
By the choice of $\left\{u_n\right\}$ and the facts that $u|_{\Omega^c}=0,$  we have
\beao
\displaystyle J_{\lambda_n}(u_n)&\geq&
\dis\left(\frac{1}{2}-\frac{1}{p}\right)\left(\int_{B_\rho(x_n)\cap B^c_R(0)}\la_n V(x)u_n^2dx-\de\|u_n\|^2_{H^2(\rz^N)}\right)\\
&\geq&
\displaystyle\left(\frac{1}{2}-\frac{1}{p}\right)\left(\int_{B_\rho(x_n)\cap B^c_R(0)}\la_n V(x)|u_n-u|^2dx-\de\|u_n\|^2_{H^2(\rz^N)}\right)\\
&\geq& \displaystyle\left(\frac{1}{2}-\frac{1}{p}\right)\left(\la_n M_0\int_{B_\rho(x_n)}|u_n-u|^2dx-\de\|u_n\|^2_{H^2(\rz^N)}\right)\\
&\to& +\infty.\eeao
This contradiction deduces  that $u_n\to u$ in $L^{p}(\rz^N)$.

Since $J'_{\la_n}(u_n)=0$, then for any $\psi\in H(\Om)$, we have
\[
\int_{\rz^N}(\De u_n\De\psi+V_{\la_n}u_n\psi)dx=\int_{\rz^N}|u_n|^{p-2}u_n\psi dx.
\]
Let $n\to+\infty$, we have
\[
\int_{\Om}(\De u\De\psi-\de u\psi)dx=\int_{\Om}|u|^{p-2}u\psi dx.
\]
Thus $J'_\Om(u)=0$.
Since
\[\barr{ll}
J_{\la_n}(u_n)&=\displaystyle J_{\la_n}(u_n)-\frac12\langle J'_{\la_n}, u_n\rangle\vspace{0.2cm}\\
&=\displaystyle \left(\frac12-\frac1p\right)\int_{\rz^N}|u_n|^p dx\vspace{0.2cm}\\
&=\displaystyle\left(\frac12-\frac1p\right)\int_{\Om}|u|^p dx+o(1),\earr
\]
Then by Lemma \ref{le:compare}, $k=(\frac12-\frac1p)\int_{\rz^N}|u|^p dx\geq \sigma>0$ which implies $u\neq 0$. Thus $u\in\N_\Om$ and
$$J_\Om(u)=\left(\frac12-\frac1p\right)\int_{\rz^N}|u|^p dx=k\geq c(\Om).$$
It is a contradiction. The desired result holds true.  Furthermore, $\|u_n-u\|^2_{\la_n}\to 0$ as $n\to+\infty$, then according to Lemma \ref{le:em} and Lemma $\ref{le:en}$, we have  that $u_n\to u$ in $H^2(\rz^N)$.
\hfill{$\Box$}
\vskip8pt


\begin{lemma}\label{le:calimit}
Suppose $N\geq 8$ and $p=2^{**}$, then $\lim\limits_{\la\to+\infty}c_\la=c(\Om)$.
\end{lemma}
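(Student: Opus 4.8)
The plan is to mimic the proof of Lemma~\ref{le:clamdalimit}, replacing the strong $L^p$ convergence used there (which is unavailable at the critical exponent because of possible concentration) by a Br\'ezis--Lieb decomposition combined with the strict bound $c(\Om)<\frac2N S^{\frac N4}$ established above for $N\geq 8$. First, exactly as at the beginning of Lemma~\ref{le:clamdalimit}, $c_\la$ is nondecreasing in $\la$ (rescale a minimizer of $c_\la$, which exists by Proposition~\ref{le:criexist}, into $\N_\mu$ for $\mu<\la$) and $\sigma\leq c_\la\leq c(\Om)$ by Lemma~\ref{le:compare}, so $L:=\lim_{\la\to\infty}c_\la$ exists and $L\leq c(\Om)$. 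Assume for contradiction that $L<c(\Om)$, pick $\la_n\to\infty$ with $c_{\la_n}\to L$, and let $u_n\in\N_{\la_n}$ achieve $c_{\la_n}$ (Proposition~\ref{le:criexist}). Then $\|u_n\|_{\la_n}^2=\frac{2p}{p-2}c_{\la_n}$ (as in Lemma~\ref{le:psbound}) is bounded, so $\{u_n\}$ is bounded in $H^2(\rz^N)$ by Lemmas~\ref{le:em} and~\ref{le:en}; along a subsequence $u_n\rightharpoonup u$ in $H^2(\rz^N)$ and in $L^{2^{**}}(\rz^N)$, and $u_n\to u$ in $L^2_{loc}(\rz^N)$ and a.e.

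Next I would record two facts about the weak limit $u$, both carried over from Lemma~\ref{le:clamdalimit}. First, $u|_{\Om^c}=0$, so $u\in H(\Om)$: otherwise $u$ is nonzero on some compact $F\subset\Om^c$ on which $V\geq\epsilon_0>0$, and then, using $u_n\in\N_{\la_n}$,
\[
\begin{aligned}
c_{\la_n}&=\Big(\frac12-\frac1p\Big)\int_{\rz^N}(|\De u_n|^2+V_{\la_n}u_n^2)\,dx\\
&\geq\Big(\frac12-\frac1p\Big)\Big(\la_n\epsilon_0\!\int_F u_n^2\,dx-\de\|u_n\|_{H^2(\rz^N)}^2\Big)\to+\infty,
\end{aligned}
\]
contradicting $c_{\la_n}\to L$. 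Second, $J'_\Om(u)=0$: testing $J'_{\la_n}(u_n)=0$ against $\psi\in H(\Om)$ and letting $n\to\infty$ (on $\Om$ one has $V_{\la_n}\equiv-\de$), the only ingredient special to the critical case is the weak convergence $|u_n|^{2^{**}-2}u_n\rightharpoonup|u|^{2^{**}-2}u$ in $L^{(2^{**})'}(\rz^N)$ (boundedness in $L^{(2^{**})'}$ plus a.e.\ convergence), which lets one pass to the limit in the nonlinearity. In particular $J_\Om(u)=J_\Om(u)-\frac12\langle J'_\Om(u),u\rangle=\frac2N\int_\Om|u|^{2^{**}}dx\geq0$.

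The core step is the decomposition of $v_n:=u_n-u$. Since $\mbox{supp}\,u\subset\Om\subset V^{-1}(0)$, every $V$-cross term $\int Vuv_n\,dx$ and $\int Vu^2\,dx$ vanishes, so combining the standard Br\'ezis--Lieb splittings of $\int|\De u_n|^2$ and $\int|u_n|^{2^{**}}$ with $\int u_n^2=\int u^2+\int v_n^2+o(1)$ yields
\[
\begin{aligned}
&J_{\la_n}(u_n)=J_\Om(u)+J_{\la_n}(v_n)+o(1),\\
&\langle J'_{\la_n}(u_n),u_n\rangle=\langle J'_\Om(u),u\rangle+\langle J'_{\la_n}(v_n),v_n\rangle+o(1).
\end{aligned}
\]
Since $J'_{\la_n}(u_n)=0$ and $J'_\Om(u)=0$, this forces $\langle J'_{\la_n}(v_n),v_n\rangle=o(1)$, i.e.\ $\|v_n\|_{\la_n}^2-\int_{\rz^N}|v_n|^{2^{**}}dx=o(1)$; moreover $\mbox{supp}\,V_{\la_n}^-\subset B_R$ and $v_n\to0$ in $L^2_{loc}$ give $\int V_{\la_n}^-v_n^2\,dx\to0$, hence $\|v_n\|_{\la_n}^2=\int_{\rz^N}(|\De v_n|^2+V_{\la_n}^+v_n^2)\,dx+o(1)$. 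Set $b:=\lim_n\|v_n\|_{\la_n}^2=\lim_n\int_{\rz^N}|v_n|^{2^{**}}dx$ (along a further subsequence). If $b=0$, then $\|v_n\|_{\la_n}\to0$, so $\|v_n\|_{H^2(\rz^N)}\to0$ by Lemmas~\ref{le:em} and~\ref{le:en}, whence $u_n\to u$ in $H^2(\rz^N)$ and in $L^{2^{**}}(\rz^N)$ and $c_{\la_n}\to J_\Om(u)=L$; since $L\geq\sigma>0$ we get $u\neq0$, so $u\in\N_\Om$ and $L=J_\Om(u)\geq c(\Om)$, a contradiction. If $b>0$, then from $\|v_n\|_{\la_n}^2\geq\int|\De v_n|^2\,dx+o(1)$ and the Sobolev inequality $b\geq Sb^{2/2^{**}}$, whence $b\geq S^{\frac N4}$; but then $L=J_\Om(u)+\frac2N b\geq\frac2N b\geq\frac2N S^{\frac N4}>c(\Om)$ by the estimate proved above, again contradicting $L<c(\Om)$. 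In either case we reach a contradiction, so $L=c(\Om)$.

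The step I expect to be the main obstacle is precisely the handling of the critical nonlinearity: $u_n\rightharpoonup u$ cannot be upgraded to strong $L^{2^{**}}$ convergence a priori (a bubble may form at a point of $\Om$, or in principle escape to infinity), so the whole argument rests on the quantitative gap $c(\Om)<\frac2N S^{\frac N4}$, which is exactly why $N\geq 8$ is assumed. A secondary nuisance is the bookkeeping with the $n$-dependent norms $\|\cdot\|_{\la_n}$ in the Br\'ezis--Lieb identities, and the verification that the steep potential forces $\int V_{\la_n}^-v_n^2\to0$ and $u|_{\Om^c}=0$; once these are secured, any escape of mass to infinity is automatically absorbed into $b$ and eliminated by the same Sobolev-versus-$c(\Om)$ comparison.
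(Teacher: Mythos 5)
Your proposal is correct and follows essentially the same route as the paper's proof: monotonicity of $c_\la$ to get the limit, weak convergence of minimizers with $u|_{\Om^c}=0$ and $J'_\Om(u)=0$, a Br\'ezis--Lieb splitting of $v_n=u_n-u$, and the dichotomy $b\geq S^{N/4}$ versus $b<S^{N/4}$ forced by the energy gap $c(\Om)<\frac{2}{N}S^{N/4}$. The only difference is cosmetic (you frame the whole argument as a contradiction with $L<c(\Om)$, while the paper only uses contradiction to exclude $b>0$ and then concludes $k=c(\Om)$ directly), so nothing further is needed.
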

\proof It is easy to see that $c_\la$ is increase with respect to $\la$ and $c_\la\leq c(\Om)$. Assume $\lim\limits_{\la_n\to+\infty}c_{\la_n}=k\leq c(\Om)$. For $n$ large enough, let $u_n\in X_{\la_n}$ satisfies $J_{\la_n}(u_n)=c_{\la_n}$ and $J'_{\la_n}(u_n)=0$. As proved in Lemma \ref{le:psbound}, we can easy to see  that $u_n$ is bounded in $X_{\la_n}$, namely $\|u_n\|_{\la_n}\leq C$ for some $C>0$. And as a result of Lemma \ref{le:em} and Lemma \ref{le:en}, $u_n$ is bounded in $H^2(\rz^N)$. Then up to a subsequence, we have
\[\left\{\barr{ll}
u_n\rightharpoonup u\quad \hbox{in}\; H^2(\rz^N),\\
u_n\rightharpoonup u\quad \hbox{in}\; L^{2^{**}}(\rz^N),\\
u_n\to u\quad \hbox{in }\;L^2_{loc}(\rz^N),\\
u_n\to u\quad \hbox{a.e. in}\; \rz^N.\earr\right.
\]
Similar to the proof of Lemma \ref{le:clamdalimit}, $u=0$ on $\rz^N\setminus\Om$.
And for each $\phi\in H(\Om)$,  we have
\beao
0&=&J'_{\la_n}(u_n)\phi\\
&=&\int_{\rz^n}(\De u_n\De\phi+V_{\la_n}u_n\phi )dx-\int_{\rz^N}u_n^{2^{**}}\phi dx\\
&\to&\int_{\Om}(\De u\De\phi-\de u^2\phi)dx-\int_{\Om}u^{2^{**}}\phi dx\\
&=&J'_\Om(u)\phi, \hbox{ as } n\to+\infty,
\eeao
Thus $J'_\Om(u)=0$. Furthermore,
\[
J_\Om(u)=J_\Om(u)-\frac12\langle J'_\Om(u),u\rangle
=\left(\frac12-\frac{1}{2^{**}}\right)\int_{\Om}|u|^{2^{**}}dx\geq 0.
\]
Let $v_n=u_n-u$, by Br\'ezis-Lieb's Lemma, we have
$$
\int_{\rz^N}|\De u_n|^2dx=\int_{\Om}|\De u|^2dx+\int_{\rz^N}|\De v_n|^2dx+o(1),
$$
$$
\int_{\rz^N}|u_n|^{2^{**}}dx=\int_{\Om}|u|^{2^{**}}dx+\int_{\rz^N}|v_n|^{2^{**}}dx+o(1),
$$
and
\beao\int_{\rz^N}V_{\la_n}u^2_ndx&=&\int_{\rz^N}V_{\la_n}u^2dx+\int_{\rz^N}V_{\la_n}v^2_ndx
+\int_{\rz^N}2V_{\la_n}uv_ndx\\
&=&-\de\int_{\Om} u^2dx+\int_{\rz^N}V_{\la_n}v^2_ndx-2\de\int_{\Om}uv_ndx\\
&=&-\de\int_{\Om} u^2dx+\int_{\rz^N}V_{\la_n}v^2_ndx+o(1).\eeao
Thus we obtain
$$
J_{\la_n}(u_n)=J_\Om(u)+J_{\la_n}(v_n)+o(1),
$$
$$
\langle J'_{\la_n}(u_n),u_n\rangle=\langle J'_{\Om}(u),u\rangle+\langle J'_{\la_n}(v_n),v_n\rangle+o(1).
$$
We may assume that
\[b=\lim_{n\to+\infty}\int_{\rz^N}(|\De v_n|^2+V_{\la_n}v_n^2)dx=\lim_{n\to+\infty}\int_{\rz^N}|v_n|^{2^{**}}dx>0.\] On one hand, by Sobolev inequality, we have
\beao
b&=&\lim_{n\to+\infty}\int_{\rz^N}|v_n|^{2^{**}}dx\\
&=&\lim_{n\to+\infty}\int_{\rz^N}(|\De v_n|^2+V_{\la_n}v_n^2)dx\\
&=&\lim_{n\to+\infty}\int_{\rz^N}(|\De v_n|^2+V^+_{\la_n}v_n^2)dx\\
&\geq&\lim_{n\to+\infty}\int_{\rz^N}|\De v_n|^2\\
&\geq&\lim_{n\to+\infty}S(\int_{\rz^N}|v_n|^{2^{**}}dx)^{\frac{2}{2^{**}}}.
\eeao
Thus $b\geq S^{\frac{N}{4}}$.

On the other hand, since
\[
J_{\la_n}(v_n)=J_{\la_n}(v_n)-\frac12\langle J_{\la_n}(v_n),v_n\rangle+o(1)
=\left(\frac12-\frac{1}{2^{**}}\right)\int_{\rz^N}|v_n|^{2^{**}}dx+o(1),
\]
then
\beao\frac{2}{N}S^{\frac{N}{4}}&>&c(\Om)\geq k
\geq\lim_{n\to+\infty}J_{\la_n}(v_n)\\
&=\dis&\left(\frac12-\dis\frac{1}{2^{**}}\right)\lim_{n\to+\infty}\int_{\rz^N}|v_n|^{2^{**}}dx
=\left(\frac12-\frac{1}{2^{**}}\right)b.\eeao
Thus $b<S^{\frac{N}{4}}.$ This leads to a contradiction. Thus we have $u_n\to u$ in $L^{2^{**}}(\rz^N)$. According to Lemma \ref{le:em} and Lemma \ref{le:en}, we see that $u_n\to u$ in $H^2(\rz^N)$. Furthermore, by Lemma \ref{le:compare}, we have
\[\barr{ll}
J_\Om(u)&=\displaystyle \left(\frac12-\frac{1}{2^{**}}\right)\int_{\Om}|u|^{2^{**}}dx\vspace{0.2cm}\\
&=\displaystyle \left (\frac12-\frac{1}{2^{**}}\right)\lim_{n\to+\infty}\int_{\Om}|u_n|^{2^{**}}dx
\vspace{0.2cm}\\
&=\displaystyle \lim_{n\to+\infty}J_{\la_n}(u_n)=k\geq\sigma>0,\earr
\]
then $u\neq 0$. Hence, $u\in\N_\Om$ and
$$c(\Om)\leq J_\Om(u)=k\leq c(\Om)$$ which implies $J_{\Om}(u)=c(\Om)$.\hfill{$\Box$}
\vskip8pt

Now we  are lead to the proof of Theorem \ref{th:main-theorem1}.
\vskip8pt
\noindent{\bf Proof of Theorem \ref{th:main-theorem1}:} The existence of least energy solutions to (\ref{maineq}) is proved by Proposition \ref{le:clambdaachieved} and Proposition \ref{le:criexist} for $\la>\La_2$. The asymptotic behavior of least energy solutions follows from Lemma \ref{le:clamdalimit} and Lemma \ref{le:calimit} for $\la\to +\infty$. Thus we complete the proof of our main result Theorem \ref{th:main-theorem1}.\hfill{$\Box$}
\vskip8pt

\bibliographystyle{springer}
\bibliography{mrabbrev,literatur}

\begin{thebibliography}{32}
\bibitem{ABC} A. Ambrosetti, M. Badiale and S. Cingolani, Semiclassical states of nonlinear Schr\"odinger equations, Arch. Rational Mech. Anal., {\bf 140}(1997), 285-300.

\bibitem{alves1} C. O. Alves, J. M. do \'O, O. H. Miyagaki, Nontrivial solutions for a class of semilinear biharmonic problems involving critical exponent, Nonlinear Anal., {\bf 46}(2001), 121-133.

\bibitem{davis} I. D. Abrahams, A. M. J. Davis, Deflection of a partially clamped elastic plate, IUTAM Symposium on Diffraction and Scattering in Fluid Mechanics and Elasticity, Fluid Mechanics and Its Applications, {\bf 68}(2002), 303-312.

\bibitem{AAS} A. Ambrosetti, A. Malchiodi and S. Secchi, Multiplicity results for some nonlinear Schr\"{o}dinger equations with potentials, Arch. Rational Mech. Anal., {\bf 159}(2001), 253-271.

\bibitem{an} C. O. Alves,  A. B. N\'obrega, Existence of multi-bump solutions for a class of elliptic problems involving the biharmonic operator, arXiv:1602.03112.

\bibitem{bpw} T. Bartsch, A. Pankov and Z. Q. Wang, Nonlinear Schr\"odinger equations with steep potential well, Commun. Contemp. Math., {\bf 3}(2001), 549-569.

\bibitem{bw} T. Bartsch, Z. Q. Wang, Multiple positive solutions for a nonlinear Schr\"{o}dinger equation, Z. angew. Math. Phys., {\bf 51} (2000), 366-384.

\bibitem{carriao} P. Carri\"ao, R. Demarque, O. Miyagaki, Nonlinear biharmonic problems with singular potentials, Comm. Pure Appl. Anal., {\bf 13} (2014), 2141-2154.

\bibitem{Cin2} S. Cingolani and M. Lazzo, Multiple positive solutions to nonlinear Schr\"odinger equations with competing potential functions, J.  Differential Equations, {\bf 160}(2000), 118-138.

\bibitem{Cin4} S. Cingolani and M. Nolasco, Multi-peak periodic semiclassical states for a class of nonlinear Schr\"odinger equations, Proc. Royal Soc. Edinburgh, Sect. A, {\bf 128}(1998), 1249-1260.

\bibitem{DF1} M. Del Pino, P. Felmer, Semi-classical states for nonlinear Schr\"odinger equations, J. Funct. Anal., {\bf 149}(1997), 245-265.


\bibitem{DF2} M. Del Pino, P. Felmer, Multi-peak bound states for nonlinear Schr\"odinger equations, Ann. Inst. Henri Poincar\'e, {\bf 15}(1998), 127¨C149.

\bibitem{wei} J. D\'avila, L. Dupaigne, K. Wang, J. Wei, A monotonicity formula and a Liouville type theorem for a fourth order supercritical problem, Adv. Math., {\bf 258}(2014), 240-285.

\bibitem{deng} Y. Deng and W. Shuai, Non-trivial solutions for a semilinear biharmonic problem with critical growth and potential vanishing at infinity, Proc. Royal Soc. Edinburgh., {\bf 145 A}(2015), 281-299.

\bibitem{fp} G. M. Figueiredo, M. T. O. Pimenta, Multiplicity of solutions for a biharmonic equation with subcritical or critical growth, Bull. Belg. Math. Soc. Simon Stevin, {\bf 20}(2013), 519-534.

\bibitem{fourtudo} M. Furtado, E. A. B. Silva and M. S. Xavier: Multiplicity and concentration of solutions for elliptic systems with vanishing potentials. J. Differential Equations, {\bf 249}(2010), 2377-2396.

\bibitem{GGS} F. Gazzola, H. C. Grunau and G. Sweers, Polyharmonic Boundary Value Problems, Springer, 2010.

\bibitem{grunau} F. Gazzola, H. C. Grunau, Radial entire solutions for supercritical biharmonic equations, Math. Ann., {\bf 334}(2006), 905-936.

\bibitem{guo} Z. Guo, X. Huang, F. Zhou, Radial symmetry of entire solutions of a biharmonic equation with exponential nonlinearities, J. Funct. Anal., {\bf 268}(2015), 1972-2004.

\bibitem{hu} S. Hu, L. Wang, Existence of nontrivial solutions for fourth-order asymptotically linear elliptic equations, Nonlinear Anal., {\bf 94}(2014), 120-132.

\bibitem{jiang} Y. Jiang, H. Zhou,   Schr\"odinger Poisson equation with steep well potential, J. Differential Equations, {\bf 251}(2011), 582-608.

\bibitem{lazer} A. C. Lazer, P. J. McKenna, Large-amplitude periodic oscillations in suspension bridge: Some new connections with nonlinear analysis, SIAM Rev., {\bf 32}(1990), 537-578.

\bibitem{lions} P. L. Lions, The concentration-compactness principle in the calculus of variations. The locally compact case. Part I, Ann.Inst.H.Poincar\'e Anal. Non Lin\'eaire,  1(1984), 109-145.

\bibitem{walter} P. J. McKenna, W. Walter, Travelling waves in a suspension bridge, SIAM J. Appl. Math., {\bf 50}(1990), 703-715.

\bibitem{marcos} M. T. O. Pimenta, S. H. M. Soares, Existence and concentration of solutions for a class of biharmonic equations, J. Math. Anal. Appl., {\bf 390}(2012), 274-289.

\bibitem{ReedSimon} M. Reed and B. Simon, Methods of Modern Mathmatical Physics, Vol IV. Academic press, (1978).

\bibitem{salvatore} A. Salvatore, M. Squassina, Deformation from symmetry for Schr\"odinger equations of higher order on unbolunded domains, Electronic J. Differential Equations, {\bf 2003}(2003), 1-15.

\bibitem{wang} Y. Wang, Y. Shen, Multiple and sign-changing solutions for a class of semilinear biharmonic equation, J. Differential Equations, {\bf 246}(2009), 3109-3125.

\bibitem{w} M. Willem, Minimax theorems. Progress in Nonlinear Differential Equations and their applications, Birkh\"auser Boston, Inc., Boston, MA, (1996).

\bibitem{ye} Y. Ye, C. Tang, Existence and multiplicity of solutions for fourth-order elliptic equations in $\mathbb{R}^N$, J. Math. Anal. Appl., {\bf 406} (2013) 335-351.

\bibitem{zhao} L. Zhao, H. liu, F. Zhao, Existence and concentration of solutions for the Schr\"odinger Poisson equations with steep well potential, J. Differential Equations, {\bf 255}(2013), 1-23.

\bibitem{ztz} W. Zhang, X. Tang, J. Zhang, Existence and concentration of solutions for sublinear fourth-order elliptic equations, Electronic J.  Differential Equations, {\bf 2015}(2015), 1-9.
\end{thebibliography}
\newcommand{\noopsort}[1]{} \newcommand{\printfirst}[2]{#1}
\newcommand{\singleletter}[1]{#1} \newcommand{\switchargs}[2]{#2#1}

\end{document}